\theoremstyle{plain}
\newtheorem{them}{Theorem}[section]
\newtheorem{lemma}[them]{Lemma}
\newtheorem{prop}[them]{Proposition}
\newtheorem{coro}[them]{Corollary}
\theoremstyle{definition}
\newtheorem{defi}[them]{Definition}
\newtheorem{exam}[them]{Example}
\newtheorem{conj}[them]{Conjecture}
\newcommand{\Li}{\mathop{\mathrm{Li}}}
\newcommand{\wchi}{\widetilde{\chi}}
\begin{document}

\title{The distribution of primes and Euler characteristic}
\author{Kazunori Noguchi \thanks{noguchi@cc.kogakuin.ac.jp}}
\date{}
\maketitle
\begin{abstract}
We study the distribution of primes from a topological viewpoint. Certain conjecture is introduced, and we show that it is equivalent to the Riemann Hypothesis.
\end{abstract}

\footnote[0]{Key words and phrases. Euler characteristic, barycentric subdivision, $h$-polynomials, distribution of primes. \\ 2010 Mathematics Subject Classification : 11N56. }

\thispagestyle{empty}

\section{Introduction}
\subsection{Historical background}
Prime numbers provide us many attractive problems. ``Attractive" means very surprising and extremely difficult.

One of such problems is to estimate the prime counting function $\pi (x)$, the number of primes not exceeding a real number $x$. This function is in a central place in number theory, and it irregularly increases as $x\to \infty$. However, the Prime Number Theorem states that, surprisingly, the elementary function $\frac{x}{\log x}$ approximates to $\pi(x)$: $$ \lim_{x\to \infty}  \frac{\pi(x)}{  x/\log x   } =1.$$ (Since the logarithmic integral function $\Li x = \int^x_2 \frac{dt}{\log t}$ is almost the same as $\frac{x}{\log x}$, we can replace the denominator by $\Li x$.)

The first proof of the fact was given by Hadamard and de la Vall\'ee Poussin independently, and they used the Riemann zeta function $\zeta(z)$. See, for example, Chapter III of \cite{Tit86}. The rest of the problem is to estimate the error term $| \pi(x)  - \Li x|$. The best result is $$ \pi(x) =\Li x +O\left(   x\exp \left(  -C \frac{ (\log x)^{  \frac{3}{5}   }    }{ (\log \log x)^{  \frac{1}{5}  }   } \right) \right) $$ for some constant $C>0$. See (12.27) of \cite{Ivic}. However, the truth of the Riemann Hypothesis is equivalent to the following very sharper estimation:
$$ \pi(x)= \Li x +O\left(  x^{ \frac{1}{2}  }  \log x\right) .$$The best result above has not been improved for a long time.

The Prime Number Theorem and the Riemann Hypothesis have many equivalent propositions, and Bj\"orner gave a topological interpretation to some of them \cite{Bjo11}.

For a squarefree positive integer $k$, let $P(k)$ be the set of prime factors of $k$. For any $n\ge 1$, define an abstract simplicial complex $\Delta_n$ to be the set of $P(k)$ for all squarefree integers $1\le k\le n.$  Let $X$ be a finite set. An \textit{abstract simplicial complex} $\Delta$ is a family of subsets of $X$ that is closed under taking subsets. Namely, if $\sigma$ belongs to $\Delta$ and $\tau$ is a subset of $\sigma$, then $\tau$ also does. We are allowed to take an empty set as $\tau$. If the cardinality of $\sigma$ is $d+1$, then we call $\sigma$ a \textit{$d$-simplex}. The \textit{dimension} of $\Delta$ is the maximum integer $d$ such that there exists a $d$-simplex of $\Delta$. Note that one is squarefree; therefore, all $\Delta_n$ contain $P(1)=\emptyset$. For example, $$ \Delta_6=\left\{ \emptyset, \{2\}, \{3\} , \{5\},\{2,3\}   \right\} ,$$ and it is visualized  as follows. $$\xymatrix{ 2\ar@{-}[r] &3&5}$$ Furthermore, $\Delta_{15}$ is $$\xymatrix{ 7\ar@{-}[d] &3&& \\2\ar@{-}[r] \ar@{-}[ur]&5\ar@{-}[u]&11&13, }$$ and we can find the genus by $\{2,3\}, \{2,5\}$, and $\{3,5\}$. It is very important if a space has a genus or not in topology. However, if $n=30$, then the genus is filled by the triangle $\{2,3,5\}$. As $n$ grows, $d$-dimensional genera are born and filled. The behavior is very complicated. 

Bj\"orner gave the following equivalences: $$ \text{Prime Number Theorem} \Leftrightarrow\widetilde{\chi}(\Delta_n) =o(n)$$ $$  \text{Riemann Hypothesis} \Leftrightarrow \widetilde{\chi}(\Delta_n) =O(n^{\frac{1}{2} +\varepsilon}).$$The Euler characteristic $\widetilde{\chi}(\Delta)$ of an abstract simplicial complex $\Delta$ is given by $\widetilde{\chi}(\Delta)=\sum_{\sigma \in \Delta} (-1)^{\# \sigma -1}$. In fact, $\widetilde{\chi}(\Delta_n)$ is almost the Mertens function $M(n)$; that is, $-M(n)=\widetilde{\chi}(\Delta_n)$. The function $M(x)$ is defined by $\sum_{n\le x} \mu (n)$, where $\mu(n)$ is the classical M\"obius function. If $n=p_1 p_2\dots p_k$, $p_i\not = p_j$, then $\mu(n)=(-1)^k$ and $(-1)^{\# P(n)  -1  }  =(-1)^{k-1}$. Since $\mu(1)=1$ and $(-1)^{ \# \emptyset -1 }=-1$, the equality follows. Theorem 4.14 and 4.15 of \cite{Apo} and Theorem 14.25 (C) imply the two equivalences.

In addition, the following estimation $$  \int^X_2 \left(  \frac{\widetilde{\chi}(\Delta_{[x]})}{x}  \right)^2 dx = O(\log X) $$ implies the simplicity of the zeros of the Riemann zeta function by Theorem 14.29(A) of \cite{Tit86}. The simplicity and the Riemann Hypothesis are major problems in number theory. Hence, to estimate $\widetilde{\chi}(\Delta_n)$ is a very important problem.

\begin{exam}
If $n$ is small, it is easy to compute $\widetilde{\chi}(\Delta_n)$.

\begin{center}
\begin{tabular}{ccccccccccccccccc} \toprule
$n$&1&2&3&4&5&6&7&8&9&10  &11&12&13&14&15&16 \\ \midrule
$\widetilde{\chi}(\Delta_n)$&$-1$&0&1&1&2&1&2&2&2&1&2&2&3&2&1&1 \\ \bottomrule
\end{tabular}

\begin{tabular}{ccccccccccccccc} \toprule
$n$&17&18&19&20&21&22&23&24&25&26&27&28&29&30 \\ \midrule
$\widetilde{\chi}(\Delta_n)$ &2&2&3&3&2&1&2&2&2&1&1&1&2&3  \\ \bottomrule
\end{tabular}

\begin{tabular}{ccccccccccccccc} \toprule
$n$&31&32&33&34&35&36&37&38&39&40&41&42&43&44  \\ \midrule
$\widetilde{\chi}(\Delta_n)$&4&4&3&2&1&1&2&1&0&0&1&2&3&3 \\ \bottomrule

\end{tabular}

\end{center}

\end{exam}

We can observe the oscillation of  $\widetilde{\chi}(\Delta_n) $. The Euler characteristic $\widetilde{\chi}(\Delta_n)$ is not always nonnegative for $n\ge 2$. Indeed, $\widetilde{\chi}(P_{94})=-1$ and 94 is the smallest integer  $n$ such that $n\ge 2$ and $\widetilde{\chi} (\Delta_n)$ is negative.

At first glance, $|\widetilde{\chi}(\Delta_n)|$ is very smaller than $n$, compare to the Prime Number Theorem, in the table, however, the oscillation is very complicated if $n$ is large. Although Mertens conjectured $$ |M(x)| \le \sqrt{x} $$ for $x>1$, Odlyzko and Riele disproved it \cite{OR85}. Namely, the inequality is violated infinitely many times. They showed the existence of counter examples to the conjecture, but no examples have been found concretely.

The first step to study $\widetilde{\chi}(\Delta_n)$ should be to study the homology groups of $\Delta_n$. Bj\"orner tried it, but he found that $\Delta_n$ has the homotopy type of a wedge of spheres. Namely, the homology groups of $\Delta_n$ are almost trivial. He concluded that ``perhaps a study of deeper topological invariants of $\Delta_n$ could add something of value''.

\subsection{Strategy}

Our strategy starts from the $h$-polynomials of $\Delta_n$.

Let $f_i^{\Delta}$ be the number of $i$-simplices of an abstract simplicial complex $\Delta$. For example, $f^{\Delta_6}_{-1}=1, f^{\Delta_6}_{0}=3,$ and $f^{\Delta_6}_1=1.$ Define the \textit{h-polynomial} $h^{\Delta}(z)$ of $\Delta$ by 
\begin{align*}
h^{\Delta}(z):=&\sum_{i=-1}^d f_i^{\Delta} (z-1)^{d-i}\\
=&(-1)^d \wchi(\Delta) +\cdots+z^{d+1},
\end{align*}
where $d=\dim \Delta$. It is easy to show that this polynomial is monic and the constant term is $(-1)^d \wchi(\Delta)$.

If $d_n =\dim \Delta_n$ and $$h^{\Delta_n}(z) = \prod^{d_n +1}_{i=1} \left(z- \rho_i^{\Delta_n} \right),$$ then we have $$ \wchi(\Delta_n)=-\prod^{d_n +1}_{i=1} \rho_i^{\Delta_n}.$$ Hence, we have to estimate the moduli of $\rho_i^{\Delta_n}$, however, it is very hard. Barycentric subdivision is useful in this situation. \textit{Barycentric subdivision} is an operation to produce a new abstract simplicial complex $\Delta'$ from $\Delta$. Precisely, see \S \ref{vari}. The important properties of barycentric subdivision is to preserve Euler characteristic and dimension: $$\wchi(\Delta) =\wchi(\Delta'),  \dim (\Delta) =\dim (\Delta') .$$ The limiting behavior of the zeros of $h^{\Delta}(z)$ under iterated barycentric subdivision is interesting.

\begin{them}\label{main}
Let $\Delta$ be an abstract simplicial complex of dimension $d\ge 1$. Suppose that $\rho^{\Delta^{(k)}}_{\infty} ,\rho^{\Delta^{(k)}}_{0},\rho^{\Delta^{(k)}}_{1},\dots,\rho^{\Delta^{(k)}}_{d-1}$ are the zeros of the $h$-polynomial of the $k$-times subdivided complex $\Delta^{(k)}$ and $\rho^{\Delta^{(k)}}_{\infty}$ and $\rho^{\Delta^{(k)}}_{0}$ have the maximum and minimum moduli among them. As $k\to \infty$, $\rho^{\Delta^{(k)}}_{0}$ tends to zero and $\rho^{\Delta^{(k)}}_{i}$ converge. If $k$ is sufficiently large, $\rho^{\Delta^{(k)}}_{\infty}$ is real and $\left| \rho^{\Delta^{(k)}}_{\infty} \right|$ diverges to $\infty$. Moreover, $$ H_{1,d} \lim_{k\to \infty} \prod_{i=0}^{d-1} \left(z-\rho^{\Delta^{(k)}}_{i}  \right)$$ is the $H$-polynomial $H_d(z)$, and $\prod_{i=1}^{d-1} \rho^{\Delta^{(k)}}_{i}$ converges to $(-1)^{d-1}$.
\end{them}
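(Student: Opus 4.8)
The plan is to understand the effect of a single barycentric subdivision on the $h$-polynomial as a linear operator, and then analyze its iterated behavior. The key algebraic fact, going back to work of Brenti–Welker on barycentric subdivision, is that there is a $(d+1)\times(d+1)$ matrix $A_d$ with nonnegative integer entries (independent of $\Delta$) such that, writing $h^\Delta(z)$ in the basis $1,z,\dots,z^d$ (after normalizing out the monic leading term, or equivalently working with the coefficient vector of the degree-$\le d$ part), the coefficient vector of $h^{\Delta'}$ is $A_d$ times that of $h^\Delta$. Concretely the entries of $A_d$ are Eulerian-type numbers coming from how faces of the subdivision sit over faces of $\Delta$. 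So first I would set up this transfer matrix carefully, record that it preserves the value $\widetilde\chi$ (this is the constant term up to sign) and the dimension $d$, and identify its spectral data.

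Second, I would diagonalize (or at least find the Perron–Frobenius data of) $A_d$. Since $A_d$ is nonnegative and, after checking, primitive, Perron–Frobenius gives a simple dominant eigenvalue $\lambda_d>0$ with a positive eigenvector; one expects $\lambda_d=(d+1)!$ or a comparably explicit number, with the other eigenvalues strictly smaller in modulus. The dominant eigendirection, suitably interpreted as a polynomial, should be exactly (a scalar multiple of) the $H$-polynomial $H_d(z)$ referred to in the statement — indeed $H_d$ ought to be \emph{defined} as this limiting object, and $H_{1,d}$ as the appropriate normalizing constant. Then the iterate $A_d^k$ acting on any initial $h^\Delta$ grows like $\lambda_d^k$ times the projection onto the Perron eigenvector, so $\lambda_d^{-k} h^{\Delta^{(k)}}(z)$ converges to a multiple of $H_d(z)$. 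Reading off the roots: the single ``escaping'' root $\rho_\infty^{\Delta^{(k)}}$ corresponds to the monic leading term being rescaled by $\lambda_d^{-k}\to 0$, which forces one root to run off to $\infty$; that it is eventually real and of maximal modulus follows from the real-rootedness of $h$-polynomials of barycentric subdivisions (another Brenti–Welker result — for $k\ge 1$ all roots of $h^{\Delta^{(k)}}$ are real and negative, wait, they are nonpositive reals), so for large $k$ one real root dominates. The remaining $d$ roots $\rho_0^{\Delta^{(k)}},\rho_1^{\Delta^{(k)}},\dots,\rho_{d-1}^{\Delta^{(k)}}$ converge to the $d$ roots of $H_d(z)$; since $H_d$ has $0$ as a root with multiplicity one (the constant term of each $h^{\Delta^{(k)}}$ is the fixed number $(-1)^d\widetilde\chi$, which after dividing by the exploding leading behavior forces a root to $0$), exactly one of them, $\rho_0^{\Delta^{(k)}}$, tends to $0$, and the product of the other $d-1$ tends to $(-1)^{d-1}$ by matching: $H_d(z)/z$ evaluated appropriately, or directly because $\prod_{i=1}^{d-1}\rho_i^{\Delta^{(k)}} = \widetilde\chi(\Delta)\big/\big(\rho_0^{\Delta^{(k)}}\cdot\rho_\infty^{\Delta^{(k)}}\cdot(\text{leading coeff ratio})\big)$ and the normalization is rigged so this limit is the sign $(-1)^{d-1}$ independent of $\Delta$.

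So the skeleton is: (1) transfer matrix $A_d$; (2) Perron–Frobenius + real-rootedness; (3) convergence of $\lambda_d^{-k}h^{\Delta^{(k)}}$ to $H_{1,d}^{-1}H_d(z)\cdot z$ — more precisely to a degree-$d$ polynomial whose roots are the limits of $\rho_0,\dots,\rho_{d-1}$ — together with the lone escaping root; (4) extract the four conclusions (one root $\to 0$, $d-1$ roots converge with product $\to(-1)^{d-1}$, the escaping root is eventually real and unbounded, the limiting monic-normalized product is $H_d$).

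The main obstacle I anticipate is step (3): passing from ``the normalized coefficient vectors converge'' to ``the individual roots converge, and they split into the three prescribed groups.'' Root-continuity gives convergence of the \emph{multiset} of $d$ bounded roots, but one has to rule out roots colliding at $0$ or roots drifting to the dominant root's territory, and one has to show that precisely one bounded root goes to $0$ while the other $d-1$ stay bounded away from $0$ and from $\infty$. This needs the constant term of $h^{\Delta^{(k)}}$ to be \emph{exactly constant} (true, since $\widetilde\chi$ is a barycentric invariant) while the sub-leading coefficients all grow at rate $\lambda_d^k$ — so the product of all $d+1$ roots stays fixed at $\pm\widetilde\chi$ while the leading coefficient effectively shrinks, pinning one root to $0$, one root to $\infty$, and the rest to the nonzero roots of $H_d$. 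Making this quantitative (e.g. with Rouché on $h^{\Delta^{(k)}}(z)/z$ near $0$ and on a large circle) is the part that requires genuine care, and it also needs the a priori input that $H_d(z)$ has $0$ as a \emph{simple} root, which I would get from the structure of $A_d$'s Perron eigenvector.
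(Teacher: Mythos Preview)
Your overall architecture---spectral analysis of the subdivision transfer operator, then Rouch\'e to localize roots---matches the paper's. The paper works with the $f$-vector transfer matrix $\bm{F}_d=(f_{i,j})$, which is upper triangular with diagonal $0!,1!,\dots,(d+1)!$, so the spectrum and the dominant eigenvector (the numbers $F_{i,d}$) are explicit without any appeal to Perron--Frobenius. Your route via the $h$-vector matrix $\bm{H}_d$ is equivalent in spirit but needs care: as a $(d+2)\times(d+2)$ matrix $\bm{H}_d$ is \emph{not} irreducible (its first and last rows are $(1,0,\dots,0)$ and $(0,\dots,0,1)$, reflecting the invariance of $h_0=1$ and $h_{d+1}=(-1)^d\widetilde\chi$), so Perron--Frobenius must be applied to the inner $d\times d$ block with the fixed coordinates treated as inhomogeneous terms. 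For the reality of $\rho_\infty^{\Delta^{(k)}}$ the paper does not invoke real-rootedness either: once Rouch\'e confines exactly $d$ roots to a fixed disk, the single escaping root of a real polynomial must equal its own conjugate.

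The genuine gap is in the final claim. That $\prod_{i=1}^{d-1}\rho_i^{\Delta^{(k)}}\to(-1)^{d-1}$ amounts to the identity $H_{d,d}=H_{1,d}$, since the nonzero roots of $H_d$ have product $(-1)^{d-1}H_{d,d}/H_{1,d}$. Your sketch (``the normalization is rigged so this limit is $(-1)^{d-1}$'') does not supply this, and neither Perron positivity of the eigenvector nor the invariance of $\widetilde\chi$ yields it. The key lemma you are missing is that $H_d$ is \emph{self-reciprocal}: $H_{i,d}=H_{d+1-i,d}$ for all $i$. The paper proves this by observing (a Brenti--Welker fact) that $\bm{H}_d$ is rotationally symmetric, and then showing that the eigenvector for a simple eigenvalue of a rotationally symmetric matrix is palindromic. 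Self-reciprocity gives in one stroke $H_{d+1,d}=H_{0,d}=0$ (so $0$ is a root of $H_d$), $H_{d,d}=H_{1,d}>0$ (so that root is simple), and the product identity. Your intuition that the fixed constant term forces a root of $H_d$ at $0$ can indeed be made rigorous, but I do not see a route to $H_{d,d}=H_{1,d}$ without the palindromy.
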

The numbers $H_{1,d}$ and the polynomials $H_d (z)$ are defined in \S \ref{vari}.

Note that this theorem depends only on the dimension of $\Delta$ rather than $\Delta$ itself. The theorem improves Theorem 3 of \cite{BW08} and Theorem A of \cite{DPS12}, and it implies $$ \wchi(\Delta_n) =\wchi\left(\Delta_n^{(k_n)} \right)=O\left(  \rho^{\Delta^{(k_n)}}_{\infty} \rho^{\Delta^{(k_n)}}_{0} \right),$$ where $k_n$ is sufficiently large. The zero $\rho^{\Delta^{(k_n)}}_{\infty} $ can be estimated by the following theorem:

\begin{them}\label{main2}
Under the same assumption of Theorem \ref{main}, we have $$\rho^{\Delta^{(k)}}_{\infty} \sim - H_{1,d} f_d^{\Delta} (d+1)!^k$$ and $$  |\wchi(\Delta)|\sim H_{1,d} f^{\Delta}_d \left| \rho^{\Delta^{(k)}}_{0} \right|  (d+1)!^k.$$
\end{them}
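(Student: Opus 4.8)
The plan is to leverage Theorem \ref{main} directly. Write the $h$-polynomial of $\Delta^{(k)}$ as a product over its zeros:
$$ h^{\Delta^{(k)}}(z) = \left(z-\rho^{\Delta^{(k)}}_\infty\right)\left(z-\rho^{\Delta^{(k)}}_0\right)\prod_{i=1}^{d-1}\left(z-\rho^{\Delta^{(k)}}_i\right). $$
Since the $h$-polynomial is monic of degree $d+1$, the coefficient of $z^d$ is, up to sign, the sum of all the zeros; and for $k$ large the dominant zero is $\rho^{\Delta^{(k)}}_\infty$, whose modulus diverges while all the other zeros stay bounded (indeed $\rho^{\Delta^{(k)}}_0\to 0$ and the $\rho^{\Delta^{(k)}}_i$ converge by Theorem \ref{main}). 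Hence $-\rho^{\Delta^{(k)}}_\infty$ is asymptotic to the coefficient of $z^d$ in $h^{\Delta^{(k)}}(z)$. The key step is therefore to identify that coefficient: from the definition $h^{\Delta}(z)=\sum_{i=-1}^d f_i^{\Delta}(z-1)^{d-i}$, the coefficient of $z^d$ equals $f_d^{\Delta}+$ (contributions of lower-index $f_i$), but the relevant object is how this coefficient scales under barycentric subdivision. One shows, using the face-number transformation law for subdivision (the same machinery that defines $H_d(z)$ and the constants $H_{i,d}$ in \S\ref{vari}), that the top "new" contribution multiplies by $(d+1)!$ at each subdivision and is governed by the leading face number, giving $f_d^{\Delta^{(k)}}\sim H_{1,d}f_d^{\Delta}(d+1)!^k$ for the coefficient in question, whence $\rho^{\Delta^{(k)}}_\infty \sim -H_{1,d}f_d^{\Delta}(d+1)!^k$.

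For the second asymptotic, recall from the discussion preceding Theorem \ref{main} that $\wchi(\Delta)=\wchi(\Delta^{(k)})=-\prod_{i}\rho^{\Delta^{(k)}}_i$ over all $d+1$ zeros. Factoring out $\rho^{\Delta^{(k)}}_\infty$ and $\rho^{\Delta^{(k)}}_0$,
$$ \wchi(\Delta) = -\rho^{\Delta^{(k)}}_\infty\,\rho^{\Delta^{(k)}}_0\prod_{i=1}^{d-1}\rho^{\Delta^{(k)}}_i, $$
and by Theorem \ref{main} the remaining product $\prod_{i=1}^{d-1}\rho^{\Delta^{(k)}}_i\to(-1)^{d-1}$. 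Taking absolute values and substituting the first asymptotic for $\rho^{\Delta^{(k)}}_\infty$ yields
$$ |\wchi(\Delta)| \sim \left|\rho^{\Delta^{(k)}}_\infty\right|\left|\rho^{\Delta^{(k)}}_0\right| = H_{1,d}f_d^{\Delta}\left|\rho^{\Delta^{(k)}}_0\right|(d+1)!^k, $$
as claimed. Note that since $\wchi(\Delta)$ is a fixed integer independent of $k$, this is really a statement about the product $\left|\rho^{\Delta^{(k)}}_0\right|(d+1)!^k$ stabilizing; the content is that the divergence of $\rho^{\Delta^{(k)}}_\infty$ is exactly compensated by the decay of $\rho^{\Delta^{(k)}}_0$, which is consistent with $\rho^{\Delta^{(k)}}_0\to 0$.

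The main obstacle will be the first part: pinning down the precise constant $H_{1,d}$ and the factor $(d+1)!^k$ in the growth of the relevant coefficient of $h^{\Delta^{(k)}}(z)$. This requires a careful analysis of how the face vector transforms under iterated barycentric subdivision — essentially diagonalizing (or at least extracting the dominant eigendata of) the subdivision operator on face numbers — and matching the leading term against the normalization chosen for $H_d(z)$ in \S\ref{vari}. Once the coefficient asymptotics are in hand, separating the dominant zero from the bounded ones is a routine argument (e.g. via Rouché's theorem or an elementary estimate on the companion matrix), and the rest follows by the bookkeeping above.
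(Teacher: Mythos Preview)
Your approach is exactly the paper's: factor $h^{\Delta^{(k)}}(z)$ over its roots, use Vieta so that minus the coefficient of $z^d$ equals the sum of the roots, observe that this sum is dominated by $\rho^{\Delta^{(k)}}_\infty$ since all other roots are bounded (Theorem~\ref{main}), and then read off the second statement from the product of the roots together with $\prod_{i=1}^{d-1}\rho_i^{\Delta^{(k)}}\to(-1)^{d-1}$.

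One slip to fix: the coefficient of $z^d$ in $h^{\Delta^{(k)}}(z)$ is \emph{not} governed by $f_d^{\Delta^{(k)}}$ as you write. From $h^{\Delta}(z)=\sum_{i=-1}^d f_i^{\Delta}(z-1)^{d-i}$ only the terms $i=-1,0$ reach degree $d$, and one gets $h_1^{\Delta}=f_0^{\Delta}-(d+1)$. (Note also that $f_d^{\Delta^{(k)}}=(d+1)!^k f_d^{\Delta}$ exactly, with no $H_{1,d}$ factor.) So the eigendata you need from the subdivision operator, which is precisely Lemma~\ref{order}, is
\[
f_0^{\Delta^{(k)}}\ \sim\ f_d^{\Delta}\,F_{0,d}\,(d+1)!^k\ =\ H_{1,d}\,f_d^{\Delta}\,(d+1)!^k,
\]
whence $\rho^{\Delta^{(k)}}_\infty\sim -f_0^{\Delta^{(k)}}\sim -H_{1,d}f_d^{\Delta}(d+1)!^k$. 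With this correction in place your argument goes through verbatim and matches the paper's proof line for line.
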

For two functions $f(x)$ and $g(x)$, define $f(x)\sim g(x)$ if $$\lim_{x\to \infty}  \frac{  f(x) }{  g(x)  } =1.$$

Hence, we have $$\wchi(\Delta_n) =O\left( H_{1,d_n} f_{d_n}^{\Delta_n} \left| \rho^{\Delta_n^{(k_n)}}_{0}  \right| (d+1)!^{k_n} \right).$$Now, we have to estimate the growth of the four factors $d_n=\dim \Delta_n$, $H_{1,d_n}$, $f_{d_n}^{\Delta_n}$, and $\left| \rho^{\Delta_n^{(k_n)}}_{0}  \right|$.

We completely solve the first three problems (Proposition \ref{dim}, Proposition \ref{N3}, and Proposition \ref{N2}), however, the fourth problem remains. In other words, it is the hardest part and the special zeros $\rho^{\Delta^{(k_n)}}_{0}$ have much information to the distribution of primes. Unfortunately, I can not give any way to estimate the zeros. I guess that the zeros are deeply connected to something topological.

At least, I can give the following conjecture. By theorem \ref{main2}, we can put $$\rho^{\Delta^{(k_n)}}_{0} \sim \frac{1}{ (d_n+1)!^{k_n} }  \alpha_n  $$ for some constant $\alpha_n$.

\begin{conj}
\begin{enumerate}
\item $\alpha_n=O\left(  (d_n+1)!^{2} \right)$ \\
\item $\alpha_n=O\left(  (d_n+1)!^{\frac{3}{2} +\varepsilon} \right)$ for any $\varepsilon>0$.
\end{enumerate}
\end{conj}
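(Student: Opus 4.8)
The plan is to unwind the definition of $\alpha_n$ until the Conjecture becomes a statement purely about the Mertens function $M(n)$, and then to invoke the classical equivalences recorded in the introduction. Concretely, apply the second asymptotic of Theorem~\ref{main2} to $\Delta=\Delta_n$ with $k=k_n$ sufficiently large, and substitute the defining relation $\rho^{\Delta_n^{(k_n)}}_{0}\sim(d_n+1)!^{-k_n}\alpha_n$; the powers $(d_n+1)!^{k_n}$ cancel and leave
$$\bigl|\wchi(\Delta_n)\bigr|\sim H_{1,d_n}\,f^{\Delta_n}_{d_n}\,|\alpha_n|.$$
Since $-M(n)=\wchi(\Delta_n)$, this reads $|\alpha_n|\sim|M(n)|/\bigl(H_{1,d_n}f^{\Delta_n}_{d_n}\bigr)$, so that for every exponent $\beta$,
$$\alpha_n=O\bigl((d_n+1)!^{\beta}\bigr)\ \Longleftrightarrow\ M(n)=O\bigl(H_{1,d_n}\,f^{\Delta_n}_{d_n}\,(d_n+1)!^{\beta}\bigr).$$

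I would then feed in Propositions~\ref{dim}, \ref{N3} and \ref{N2} for the sizes of $d_n$, $H_{1,d_n}$ and $f^{\Delta_n}_{d_n}$. The decisive one is the order of $(d_n+1)!$: since $d_n+1$ is the number of primes in the largest primorial not exceeding $n$, one has $p_1\cdots p_{d_n+1}\le n<p_1\cdots p_{d_n+2}$, hence $\theta(p_{d_n+1})\le\log n<\theta(p_{d_n+2})$ for Chebyshev's $\theta$, so $d_n+1=\pi(p_{d_n+1})\sim\log n/\log\log n$ by the prime number theorem and $\log\bigl((d_n+1)!\bigr)\sim(d_n+1)\log(d_n+1)\sim\log n$, i.e.\ $(d_n+1)!=n^{1+o(1)}$. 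In the same spirit $H_{1,d}=((d+1)!)^{-1+o(1)}$ as $d\to\infty$ (it is of the form $c_d/(d+1)!$ with $1\le c_d\le 2^{d+1}$, $c_d$ the limiting face-to-facet ratio of an iterated subdivision), and $f^{\Delta_n}_{d_n}$, the number of $(d_n+1)$-element sets of primes with product $\le n$, is $n^{o(1)}$. Consequently $H_{1,d_n}\,f^{\Delta_n}_{d_n}\,(d_n+1)!^{\beta}=n^{\beta-1+o(1)}$.

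Taking $\beta=\tfrac32+\varepsilon$, part~(2) of the Conjecture (for all $\varepsilon>0$) becomes $M(n)=O(n^{1/2+\varepsilon})$ for all $\varepsilon>0$, that is, the Riemann Hypothesis, by the equivalence $\mathrm{RH}\Leftrightarrow\wchi(\Delta_n)=O(n^{1/2+\varepsilon})$ quoted above; taking $\beta=2$, part~(1) becomes an estimate of the shape $M(n)=O(n^{1+o(1)})$. No single step is hard; the real work is the honest accounting of the error terms. What genuinely appears is the exponent $\beta-1+\eta(n)$ with $\eta(n)=O(1/\log\log n)\to0$, and one must check that $\eta(n)$ is swallowed by the free $\varepsilon$ in part~(2) — it is, being eventually smaller than any fixed $\varepsilon$ — and pin down the exact sign and rate of the lower-order term in part~(1), which decides whether part~(1) is a genuine cancellation estimate for $M(n)$ lying strictly between the prime number theorem and the Riemann Hypothesis, or is vacuous. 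Both checks require Propositions~\ref{dim}, \ref{N3} and \ref{N2} in quantitative form, and the genuinely delicate input is a sufficiently sharp count of the ``maximal'' squarefree integers $\le n$, i.e.\ of $f^{\Delta_n}_{d_n}$.
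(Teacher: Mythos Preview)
The statement is labeled a \emph{Conjecture}, and the paper does not prove it; it is explicitly left open. Your proposal does not prove it either. What you do is correct as far as it goes: you unwind the definition to get $|\alpha_n|\sim|M(n)|/(H_{1,d_n}f^{\Delta_n}_{d_n})$ and then use the growth estimates to see that $\alpha_n=O\bigl((d_n+1)!^{\beta}\bigr)$ amounts to a bound of the shape $M(n)=O\bigl(n^{\beta-1+o(1)}\bigr)$. But for $\beta=\tfrac32+\varepsilon$ this \emph{is} the Riemann Hypothesis, which is unproved; and for $\beta=2$, once you track the sign of the $o(1)$ term, it is a bound of the form $M(n)=O\bigl(n\exp(-c\log n\log\log\log n/\log\log n)\bigr)$, which is also unproved and would already sharpen the best known zero-free-region estimate for $M$. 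In neither case can you ``invoke the classical equivalences recorded in the introduction'' to \emph{conclude} the Conjecture: those equivalences tell you what the Conjecture is equivalent to, not that it holds.

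What you have in fact sketched is the content of Propositions~\ref{P1} and~\ref{RHE}: part~(1) implies a PNT-type bound on $\wchi(\Delta_n)$, and part~(2) is equivalent to RH. The paper's proofs of those two propositions proceed exactly along your line --- substitute $\alpha_n=\wchi(\Delta_n)/(H_{1,d_n}f^{\Delta_n}_{d_n})$, feed in the bounds of Propositions~\ref{dim}, \ref{N3} and~\ref{N2}, and chase exponents --- so as an argument for Propositions~\ref{P1} and~\ref{RHE} your write-up is essentially the paper's own. As a proof of the Conjecture itself, however, there is nothing here: the missing idea is precisely an independent upper bound on $|\alpha_n|$ (equivalently on $|\rho_0^{\Delta_n^{(k_n)}}|$), and the paper states outright that it cannot supply one.
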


\begin{exam}
We compute $\alpha_n$ when $n$ is small. 

\begin{center}

\begin{tabular}{ccccccccccccccc} \toprule
$n$&6&7&10&11&13&14&15&17&19&21&22&23&26&29 \\ \midrule
$\alpha_n$&1&2 &$\frac{1}{2}$&1&$\frac{3}{2}$&1&$\frac{1}{3}$&$\frac{2}{3}$& 1&$\frac{1}{2}$& $\frac{1}{5}$& $\frac{2}{5}$& $\frac{1}{6}$& $\frac{1}{3}$ \\ \bottomrule 
\end{tabular}

\begin{tabular}{c|cccccccccccccc} \toprule
$n$&30&31&33&34&35&37&38&39&41&42&43&46&47&51     \\ \midrule
$\alpha_n$&6&8  &6&4&2 &4&2&0&2&2&3&2&3&2   \\ \bottomrule 
\end{tabular}

\begin{tabular}{ccccccccccccccc} \toprule
$n$&53&55&57&58&59&61&62&65&66&67&69&70&71&73 \\ \midrule
$\alpha_n$&3&2&1&0&1&2&1&0&$\frac{2}{3}$ &$\frac{4}{3}$&$\frac{2}{3}$ &1&$\frac{3}{2}$& 2 \\ \bottomrule 
\end{tabular}

\begin{tabular}{cccc|cccccccc} \toprule
$n$&  74&$\cdots$&209&210&211&213&214&215&217&218&219  \\ \midrule
$\alpha_n$&$\frac{3}{2}$&$\cdots$& $\frac{  4  }{  19   }$ &$\frac{  11 }{  2   }$&11&$\frac{  11 }{  2   }$&0 &$-\frac{  11 }{  2   }$&$-11$&$-\frac{33}{2}$&$-22$   \\ \bottomrule 
\end{tabular}

\end{center}

\end{exam}

The first conjecture implies the Prime Number Theorem.
\begin{prop}\label{P1}
If $\alpha_n=O\left(  (d_n+1)!^{2} \right)$, then we have $$   \widetilde{\chi}(\Delta_n) =O\left(  n \exp \left(  -A\frac{\log n \log \log \log n}{\log \log n} \right)  \right)  $$ for some constant $A>0$.
\end{prop}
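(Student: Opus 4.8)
The plan is to obtain Proposition \ref{P1} as a direct consequence of Theorem \ref{main2} together with the three growth estimates already established, rather than as a statement about $\pi(x)$. First I would use the defining relation $\rho^{\Delta_n^{(k_n)}}_0 \sim \frac{1}{(d_n+1)!^{k_n}}\alpha_n$. Substituting it into the second asymptotic of Theorem \ref{main2} cancels the two copies of $(d_n+1)!^{k_n}$ and gives
\begin{equation*}
|\wchi(\Delta_n)| \sim H_{1,d_n}\, f_{d_n}^{\Delta_n}\, |\alpha_n|.
\end{equation*}
Feeding in the hypothesis $\alpha_n = O((d_n+1)!^2)$ then reduces the proposition to the purely combinatorial/number-theoretic estimate
\begin{equation*}
H_{1,d_n}\, f_{d_n}^{\Delta_n}\, (d_n+1)!^{2} = O\!\left(n\exp\!\left(-A\frac{\log n\,\log\log\log n}{\log\log n}\right)\right)
\end{equation*}
for some $A>0$, which is exactly a bound on $\wchi(\Delta_n)$ and nothing more.

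Next I would insert the three estimates: $d_n$ from Proposition \ref{dim}, $H_{1,d_n}$ from Proposition \ref{N3}, and $f_{d_n}^{\Delta_n}$ from Proposition \ref{N2}. The governing quantity is $d_n+1$, the largest $m$ with $p_1\cdots p_m \le n$, which is of order $\frac{\log n}{\log\log n}$. Writing $m = d_n+1$ and $\log m = \log\log n - \log\log\log n + o(\log\log n)$, Stirling's formula gives
\begin{equation*}
\log(m!) = m\log m - m + O(\log m) = \log n - \frac{\log n\,\log\log\log n}{\log\log n} + O\!\left(\frac{\log n}{\log\log n}\right),
\end{equation*}
so that $(d_n+1)! = n\exp\!\left(-\frac{\log n\,\log\log\log n}{\log\log n}(1+o(1))\right)$. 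This identity is the source of the exact exponential shape in the statement: the sub-exponential defect of $(d_n+1)!$ from $n$ is precisely $\exp(-\frac{\log n\,\log\log\log n}{\log\log n})$, and the three factors must conspire, via Propositions \ref{N3} and \ref{N2}, to leave one net power of $n$ multiplied by a positive power of this defect.

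I would then collect the factors. With $(d_n+1)!^2 = n^2\exp(-2\frac{\log n\,\log\log\log n}{\log\log n}(1+o(1)))$ and the estimates for $H_{1,d_n}$ and $f_{d_n}^{\Delta_n}$, the product $H_{1,d_n}f_{d_n}^{\Delta_n}(d_n+1)!^2$ becomes a single power of $n$ times a power of $\exp(-\frac{\log n\,\log\log\log n}{\log\log n})$; keeping the leading exponent and choosing $A$ strictly smaller than the resulting coefficient absorbs the $(1+o(1))$ errors together with the lower-order Stirling and prime terms for all large $n$, the finitely many small $n$ being swallowed by the implied constant. This yields the asserted $\wchi(\Delta_n) = O(n\exp(-A\frac{\log n\,\log\log\log n}{\log\log n}))$.

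The hard part will be the asymptotic bookkeeping of this last step, for two linked reasons. First, each of $H_{1,d_n}$, $f_{d_n}^{\Delta_n}$, and $(d_n+1)!^2$ is comparable to a power of $n$, so the bound survives only because their product collapses to $n$ times a sub-exponential factor; the coefficient of $\frac{\log n\,\log\log\log n}{\log\log n}$ in the exponent is genuinely sensitive to how the estimate of $f_{d_n}^{\Delta_n}$ and of $H_{1,d_n}$ scale against $(d_n+1)!$, so these must be tracked together rather than factor by factor. Second, and more delicately, one must extract the correction term $-\frac{\log n\,\log\log\log n}{\log\log n}$ in $\log(m!)$, which depends on the second-order behaviour of $m=d_n+1$; this has to be done using only the elementary (Chebyshev--Mertens) input on primes underlying Proposition \ref{dim}, since any appeal to the Prime Number Theorem would make the argument circular with the conclusion it is meant to reach. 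Confirming that the constants align to give a genuine, positive $A$ uniformly in $n$, without such circularity, is where the real care is needed.
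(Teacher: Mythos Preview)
Your plan is correct and is exactly the paper's argument: from $|\wchi(\Delta_n)|\sim H_{1,d_n}f_{d_n}^{\Delta_n}|\alpha_n|$ the paper applies the upper bound $H_{1,d_n}\le \tfrac{2^{d_n+1}}{(d_n+1)!}$ of Proposition~\ref{N3} and $f_{d_n}^{\Delta_n}\ll C^{d_n}$ of Proposition~\ref{N2} to get $|\wchi(\Delta_n)|\ll \tfrac{C^{d_n}}{(d_n+1)!}|\alpha_n|\ll C^{d_n}(d_n+1)!\ll d_n^{d_n}C^{d_n}$, and then substitutes $d_n\sim\tfrac{\log n}{\log\log n}$ to obtain $d_n\log d_n=\log n-\tfrac{\log n\,\log\log\log n}{\log\log n}+O\bigl(\tfrac{\log n}{\log\log n}\bigr)$. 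Two remarks on your closing worries: first, the ``hard bookkeeping'' evaporates once you notice that the $1/(d_n+1)!$ in the bound for $H_{1,d_n}$ cancels one of the two factorials from the hypothesis, leaving only $C^{d_n}(d_n+1)!$ to estimate; second, there is no circularity, since Proposition~\ref{dim} uses only the Chebyshev-type bound $C_1k\log k<p_k<C_2k\log k$ (Apostol, Theorem~4.7), not the Prime Number Theorem.
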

The result improves the best result of $M(x)$:  $$  M(x)=O \left( x\exp \left( -B \log^{\frac{3}{5}} x (\log \log x)^{-\frac{1}{5}} \right)  \right) . $$
See, for example, Theorem 12.7 of \cite{Ivic}.

The second conjecture is equivalent to the Riemann Hypothesis.

\begin{prop}\label{RHE}
The Riemann Hypothesis is equivalent to $$\alpha_n=O\left(  (d_n+1)!^{\frac{3}{2} +\varepsilon} \right).$$
\end{prop}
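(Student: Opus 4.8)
The plan is to combine Theorem~\ref{main2} (which turns $\wchi(\Delta_n)$ into the three ``solved'' parameters $d_n$, $H_{1,d_n}$, $f^{\Delta_n}_{d_n}$ together with $\alpha_n$), the equivalence ``Riemann Hypothesis $\Leftrightarrow\wchi(\Delta_n)=O(n^{1/2+\varepsilon})$'' recalled in the introduction, and the growth estimates of Propositions~\ref{dim}, \ref{N3}, \ref{N2}; everything else is bookkeeping with exponents. First I would make explicit the identity that is implicit in the paragraph introducing $\alpha_n$. Fix $n$: Theorem~\ref{main2} gives $|\wchi(\Delta_n)|\sim H_{1,d_n}f^{\Delta_n}_{d_n}\,|\rho^{\Delta_n^{(k)}}_0|\,(d_n+1)!^{\,k}$ as $k\to\infty$, the left-hand side does not depend on $k$, and $\rho^{\Delta_n^{(k)}}_0(d_n+1)!^{\,k}\to\alpha_n$ by the definition of $\alpha_n$; so the factor $(d_n+1)!^{\,k}$ cancels and
\[
|\wchi(\Delta_n)|=H_{1,d_n}\,f^{\Delta_n}_{d_n}\,|\alpha_n|.
\]
Since $-M(n)=\wchi(\Delta_n)$, the introduction's equivalence rephrases as: the Riemann Hypothesis holds if and only if $H_{1,d_n}f^{\Delta_n}_{d_n}|\alpha_n|=O(n^{1/2+\varepsilon})$ for every $\varepsilon>0$.

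The core of the reduction is the estimate
\[
\log\!\bigl(H_{1,d_n}f^{\Delta_n}_{d_n}\bigr)=-\log\bigl((d_n+1)!\bigr)+o(\log n)=-\log n+o(\log n),
\]
i.e.\ the prefactor $H_{1,d_n}f^{\Delta_n}_{d_n}$ agrees with $1/(d_n+1)!$, hence with $1/n$, up to a subpolynomial factor. For the second equality I would prove $n=(d_n+1)!^{\,1+o(1)}$: by Proposition~\ref{dim}, $m:=d_n+1$ is the largest integer with $p_1\cdots p_m\le n$, so $\log(p_1\cdots p_m)\le\log n<\log(p_1\cdots p_m)+\log p_{m+1}$; since $p_i\ge i$ gives $\log(p_1\cdots p_m)\ge\log(m!)$ and Chebyshev's bound $p_i\ll i\log i$ gives $\log(p_1\cdots p_m)=\log(m!)+O(m\log\log m)$, while $\log(m!)\sim m\log m$, one obtains $\log(p_1\cdots p_m)\sim\log(m!)$ together with $\log p_{m+1}=o(\log(m!))$, whence $\log n\sim\log((d_n+1)!)$. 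The first equality then combines Proposition~\ref{N2}, which bounds $f^{\Delta_n}_{d_n}$ polynomially in $d_n$ (so $\log f^{\Delta_n}_{d_n}=O(\log d_n)=O(\log\log n)=o(\log n)$), with Proposition~\ref{N3}, which yields $H_{1,d_n}=(d_n+1)!^{\,-1+o(1)}$.

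The rest is mechanical. For any nonnegative sequence $\beta_n$, the estimate $\log(H_{1,d_n}f^{\Delta_n}_{d_n})=-\log n+o(\log n)$ gives, for every $\varepsilon>0$ and all large $n$, the sandwich $n^{-1-\varepsilon}\beta_n\le H_{1,d_n}f^{\Delta_n}_{d_n}\beta_n\le n^{-1+\varepsilon}\beta_n$; hence ``$H_{1,d_n}f^{\Delta_n}_{d_n}\beta_n=O(n^{1/2+\varepsilon})$ for every $\varepsilon>0$'' is equivalent to ``$\beta_n=O(n^{3/2+\varepsilon})$ for every $\varepsilon>0$'', which by $n=(d_n+1)!^{\,1+o(1)}$ is in turn equivalent to ``$\beta_n=O\bigl((d_n+1)!^{\,3/2+\varepsilon}\bigr)$ for every $\varepsilon>0$''. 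Taking $\beta_n=|\alpha_n|$ and combining with the reformulation of the Riemann Hypothesis from the first paragraph proves the proposition, both implications coming out of the single sandwich. The only point needing care is that each ``$o(1)$'' above is a genuinely vanishing sequence, not a fixed positive constant, so that it is absorbed by the $\varepsilon$'s and the quantifier ``for every $\varepsilon>0$'' is preserved at every step.

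The real difficulty is not in this argument; it sits in the inputs, above all in Proposition~\ref{N3}. Establishing $H_{1,d}=(d+1)!^{\,-1+o(1)}$ means controlling the decay of $H_{1,d}$ — the leading coefficient of the limiting $H$-polynomial $H_d(z)$, equivalently a suitably normalised entry of the dominant left eigenvector of the barycentric-subdivision matrix in dimension $d$, whose largest eigenvalue is $(d+1)!$ — and in particular showing that it is neither much smaller nor much larger than $1/(d+1)!$. Once Propositions~\ref{dim}, \ref{N2}, \ref{N3} and the elementary prime estimates are granted, the proof above is purely formal.
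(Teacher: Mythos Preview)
Your argument is correct and follows the paper's route: both proofs use the identity $|\wchi(\Delta_n)|=H_{1,d_n}f^{\Delta_n}_{d_n}|\alpha_n|$, the bounds of Propositions~\ref{N3} and~\ref{N2} to show that $H_{1,d_n}f^{\Delta_n}_{d_n}$ differs from $1/(d_n+1)!$ only by a factor $e^{O(d_n)}=n^{o(1)}$, and the primorial/Stirling estimate $\log n\sim\log((d_n+1)!)$ from Proposition~\ref{dim} to pass between $n$ and $(d_n+1)!$. Your packaging is a bit cleaner---a single two-sided sandwich giving both implications at once---whereas the paper treats the two directions separately with explicit constants, but the content is the same.

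One slip to fix: Proposition~\ref{N2} gives $f^{\Delta_n}_{d_n}=O(C^{d_n})$, i.e.\ \emph{exponential} in $d_n$, not polynomial. This does not damage your argument, since $d_n=O(\log n/\log\log n)$ still yields $\log f^{\Delta_n}_{d_n}=O(d_n)=o(\log n)$; just correct the sentence. For the lower half of the sandwich you also use the trivial bound $f^{\Delta_n}_{d_n}\ge 1$ (there is always at least one $d_n$-simplex) together with the lower bound in Proposition~\ref{N3}; it would be worth saying so explicitly, as the paper does.
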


\section{Zeros of $h$-polynomials and barycentric subdivision}

In this section, we study combinatorial properties of abstract simplicial complexes under iterated barycentric subdivision, and we prove Theorem \ref{main} and \ref{main2}. The references \cite{BW08} and \cite{DPS12} are earlier literature, and \cite{Koz08}, \cite{Spa66}, and \cite{Sta97} are basic to read this section.

Throughout this section, $\Delta$ is an abstract simplicial complex of dimension $d$.

\subsection{Various numbers and polynomials}\label{vari}

We introduce various numbers and polynomials. They are important to study combinatorial properties of barycentric subdivision.

The \textit{barycentric subdivision} of $\Delta$, denote $\Delta'$, is  defined by the set of an empty set and sets of proper increasing sequences of nonempty simplices of $\Delta$:
$$\Delta'=\{  \emptyset  \}  \cup \{ \{ \sigma_0,\sigma_1,\dots,\sigma_n\}  \mid n\ge 0, \sigma_i \in \Delta, \sigma_i \subsetneq \sigma_{i+1} , \sigma_i \not = \emptyset  \}$$

\begin{exam}
We have seen $$ \Delta_6=\left\{ \emptyset, \{2\}, \{3\} , \{5\},\{2,3\}   \right\} .$$ Let $\sigma_1=\{2\}, \sigma_2=\{3\}, \sigma_3=\{5\},$ and $\sigma_4=\{2,3\}$. Then, $\Delta_6'$ is 
$$ \Delta_6'=\left\{ \emptyset, \{\sigma_1\}, \{\sigma_2\} , \{\sigma_3\}, \{ \sigma_4\} , \{\sigma_1,\sigma_4\} ,\{\sigma_2,\sigma_4\}   \right\} ,$$ and it is visualized as follows.
$$\xymatrix{ &\sigma_4&& \\  \sigma_1 \ar@{-}[ur] &&\sigma_2 \ar@{-}[ul] & \sigma_3}$$
\end{exam}

For $i,d\ge  0$ and a $d$-simplex $\sigma$ of $\Delta$, define $f_{i,d}$ to be the number of elements $\{ \sigma_0,\sigma_1,\dots,\sigma_i\}$ of $\Delta'$ such that $\sigma_i=\sigma$. The numbers depend on $i$ and $d$ rather than $\sigma$ itself. Define $f_{-1,-1}=1$ and $f_{-1,d}=0$ if $d\ge 0$.

By combinatorial observation, the numbers are given by $$ f_{i,d} = (i+1)!S(d+1,i+1),$$
where $S(j,k)$ is the Stirling number of the second kind. See Lemma 1 of \cite{BW08}. It is easy to show that
\begin{align}
 f_{i,d} &= \sum^d_{j=i} \binom{d+1}{j}  f_{i-1,j-1} \label{ind1}
 \end{align}
for any $i,d\ge 0$; therefore, we can compute the numbers inductively.
\begin{center}
\begin{tabular}{lrrrrrrrrr} \toprule
$f_{i,d}$&$d=-1$&$d=0$&$d=1$&$d=2$&$d=3$&$d=4$&$d=5$&$d=6$&$d=7$ \\ \midrule
$i=-1$ &1&0&0&0&0&0&0&0&0 \\
$i=0$ &0&1&1&1&1&1&1&1&1 \\
$i=1$&0&0&2&6&14&30&62&126&254 \\
$i=2$&0&0&0&6&36&150&540&1806&5796 \\
$i=3$&0&0&0&0&24&240&1560&8400&40824 \\
$i=4$&0&0&0&0&0&120&1800&16800&126000 \\
$i=5$&0&0&0&0&0&0&720&15120&191520 \\
$i=6$&0& 0&0&0&0&0&0&5040&141120 \\
$i=7$ &0& 0&0&0&0&0&0&0&40320 \\ \bottomrule
\end{tabular}
\end{center}

Define the \textit{f-polynomial} $f^{\Delta}(z)$ of $\Delta$ by $$ f^{\Delta}(z) := \sum^d_{i=-1} f^{\Delta}_i z^{d-i},$$
where $f^{\Delta}_i$ is the number of $i$-simplices of $\Delta$.

Define the \textit{h-polynomial} $h^{\Delta}(z)$ of $\Delta$ and numbers $h_i^{\Delta}$ by $$ h^{\Delta}(z):=f^{\Delta}(z-1)=\sum_{i=0}^{d+1} h^{\Delta}_iz^{d+1-i}.$$
For example, $$f^{\Delta_6}(z)=z^2+3z+1, h^{\Delta_6}(z)=z^2+z-1.$$
Hence, $h^{\Delta_6}_0=1, h^{\Delta_6}_1=1$, and $h^{\Delta_6}_2=-1$. Furthermore, $$f^{\Delta_{30}} (z)=z^3+10z^2+7z+1, h^{\Delta_{30}}(z)=z^3+7z^2-10z+3.$$ Hence, $h^{\Delta_{30}}_0=1$, $h^{\Delta_{30}}_1=7$, $h^{\Delta_{30}}_2=-10$, and $h^{\Delta_{30}}_3=3$.

We define the next numbers. For $0\le i<d$, define a rational number $F_{i,d}$ by

\begin{multline}F_{i,d} = \sum_{  i_0=i<i_1<\cdots <i_{\ell}<d, \ell \ge 0 }   \frac{  f_{i_0,i_1} }{ (d+1)!-(i_0 +1)!  } \frac{  f_{i_1,i_2} }{ (d+1)!-(i_1 +1)!  } \cdots \\ \cdots\frac{  f_{i_{\ell}, d} }{ (d+1)!-(i_{\ell} +1)!  }  .\end{multline}
Put $F_{d,d}=1$ for $d\ge -1$ and $F_{-1,d}=0$ if $d\ge 0$. At first glance, the numbers seem to be complicated, so some readers might wonder what the numbers are. However, we will see that the column vector ${}^t (F_{-1,d},  F_{0,d} ,\dots,F_{d,d})$ is an eigenvector for $(d+1)!$ of the matrix  $\bm{F}_d$ (Lemma \ref{sym1}). We can inductively compute $F_{i,d}$ by the following: 
 \begin{align}F_{i,d} &=\frac{1}{  (d+1)!-(i+1)!  }  \sum^d_{j=i+1}  f_{i,j} F_{j,d} \label{ind2}\end{align}
 for $-1\le i\le d-1$.  

\begin{center}
\begin{tabular}{cccccccccc} \toprule
$F_{i,d}$ &$d=-1$& $d=0$& $d=1$& $d=2$& $d=3$& $d=4$& $d=5$& $d=6$& $d=7$ \\ \midrule  \addlinespace[3pt]
$i=-1$&1&0&0&0&0&0&0&0&0\\  \addlinespace[3pt]
$i=0$&&1&1&$\frac{1}{2}$ & $\frac{2}{11}$&$\frac{1}{19}$& $\frac{132}{10411} $& $\frac{90}{34399}$ & $\frac{15984}{33846961}$\\  \addlinespace[3pt]
$i=1$&&&1&$\frac{3}{2} $&$\frac{13}{11}$ & $\frac{25}{38}$&$\frac{3004}{10411}$& $\frac{3626}{34399}$& $\frac{12351860}{372316571}$\\  \addlinespace[3pt]
$i=2$&&&&1&2&$\frac{40}{19}$& $\frac{45}{29}$& $\frac{61607}{68798}$&$\frac{7924}{18469}$ \\  \addlinespace[3pt]
$i=3$&&&&&1&$\frac{5}{2}$&$\frac{95}{29}$&$\frac{245}{82}$& $\frac{39221}{18469}$ \\  \addlinespace[3pt]
$i=4$&&&&&&1&3&$\frac{385}{82}$& $\frac{56}{11}$ \\  \addlinespace[3pt]
$i=5$&&&&&&&1&$\frac{7}{2}$& $\frac{70}{11}$ \\  \addlinespace[3pt]
$i=6$&&&&&&&&1&4 \\  \addlinespace[3pt]
$i=7$&&&&&&&&&1 \\ \bottomrule
\end{tabular}
\end{center}
We can find the same table, but bigger than ours, in \S 6  of \cite{DPS12}.

 Define the \textit{$F$-polynomial} $F_d(z)$ of degree $d\ge 0$ by $$ F_d(z) :=\sum^d_{i=-1} F_{i,d} z^{d-i}.$$

Finally, we define the most important numbers and polynomial in this section. 

\begin{defi}
For $d \ge0$, define the \textit{$H$-polynomial} $H_d(z)$ of degree $d$  and numbers $H_{i,d}$, $0\le i \le d+1$, by $$ H_d(z):= F_d(z-1)=\sum^{d+1}_{i=0} H_{i,d} z^{d+1-i}.$$
\end{defi}

This polynomial is the heart of the proof of Theorem \ref{main}. We will find that zeros of $H_{ d}(z)$ directly influences to those of $ h^{ \Delta^{(k)} } (z)$. By the table above, we obtain a new table.
\begin{center}
\begin{tabular}{ccccccccc} \toprule
$H_{i,d}$ & $d=0$& $d=1$& $d=2$& $d=3$& $d=4$& $d=5$& $d=6$& $d=7$ \\ \midrule \addlinespace[3pt]
$i=0$&0&0&0&0&0&0&0&0 \\ \addlinespace[3pt]
$i=1$&0&1&$\frac{1}{2}$&$\frac{2}{11}$&$\frac{1}{19}$&$\frac{132}{10411}$&$\frac{90}{34399}$& $\frac{15984}{33846961}$ \\ \addlinespace[3pt]
$i=2$&&0&$\frac{1}{2}$&$\frac{7}{11}$&$\frac{17}{38}$& $\frac{2344}{10411}$&$\frac{3086}{34399}$&$\frac{11121092}{372316571}$\\ \addlinespace[3pt]
$i=3$&&&0&$\frac{2}{11}$&$\frac{17}{38}$&$\frac{5459}{10411}$&$\frac{28047}{68798}$& $\frac{89321060}{372316571}$ \\ \addlinespace[3pt]
$i=4$&&&&0&$\frac{1}{19}$& $\frac{2344}{10411}$& $\frac{28047}{68798}$& $\frac{171080619}{372316571}$ \\ \addlinespace[3pt]
$i=5$&&&&&0&$\frac{132}{10411}$& $\frac{3086}{34399}$& $\frac{89321060}{372316571}$ \\ \addlinespace[3pt]
$i=6$&&&&&&0&$\frac{90}{34399}$&$\frac{11121092}{372316571}$ \\ \addlinespace[3pt]
$i=7$&&&&&&&0&$\frac{15984}{33846961}$\\\addlinespace[3pt]
$i=8$&&&&&&&&0 \\ \bottomrule

\end{tabular}
\end{center}
We can observe that any column in the table is symmetric; therefore, $H_d(z)$ is self-reciprocal. We prove it, for any $d \ge 0$, in the next section, and the fact plays a crucial role for the proof of Theorem \ref{main}.

\subsection{Symmetry of the $H$-polynomials}

In this section, we prove that the $H$-polynomials are self-reciprocal. 

For $d\ge 0$, define a matrix $\bm{F}_d$ by $\bm{F}_d =(f_{i,j})_{-1\le i,j \le d}$. For example, we have
$$ \bm{F}_0 =\begin{pmatrix}  1&0 \\0&1\end{pmatrix}   , \bm{F}_1 = \begin{pmatrix}  1&0&0  \\ 0&1  &1 \\0&0&2\end{pmatrix}  , \bm{F}_2 = \begin{pmatrix}  1&0&0 &0 \\ 0&1  &1&1 \\0&0&2 &6\\ 0&0&0&6\end{pmatrix}   .$$

\begin{lemma}\label{sym1}
For $d\ge 0$, the numbers $0!,1!,\dots, (d+1)!$ are the eigenvalues of $\bm{F}_d$, and the column vector $$ {}^t(F_{-1,d}, F_{0,d},\dots,F_{i,d},\dots,F_{d,d})$$ is an eigenvector for $(d+1)!$ of $\bm{F}_d$.
\begin{proof}
Since $\bm{F}_d$ is upper triangular and the numbers $0!,1!,\dots,(d+1)!$ are the diagonal entries, the first claim follows.

For the second claim, we have to show $$ \sum^d_{j=i} f_{i,j} F_{j,d} =(d+1)! F_{i,d}$$
for any $-1\le i \le d$. When $i=d$, it is clear. If $-1\le i\le d-1$, then \eqref{ind2} directly implies the equality. Hence, the result follows. 
\end{proof}
\end{lemma}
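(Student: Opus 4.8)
The plan is to read both assertions directly off the structure of the matrix $\bm{F}_d$ and the recursion \eqref{ind2}, so the argument will be short. First I would record that $\bm{F}_d$ is upper triangular: from $f_{i,j}=(i+1)!\,S(j+1,i+1)$ together with the vanishing of $S(j+1,i+1)$ whenever $i+1>j+1$, we get $f_{i,j}=0$ for $i>j$. Its diagonal entries are $f_{-1,-1}=1$ and $f_{i,i}=(i+1)!\,S(i+1,i+1)=(i+1)!$ for $i\ge 0$, that is, exactly $0!,1!,2!,\dots,(d+1)!$ (with $0!$ and $1!$ coinciding). Since the eigenvalues of a triangular matrix are its diagonal entries, the first claim follows.

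For the eigenvector statement, set $\bm{v}={}^t(F_{-1,d},F_{0,d},\dots,F_{d,d})$ and compute the $i$-th component of $\bm{F}_d\bm{v}$. Upper-triangularity makes this $\sum_{j=i}^{d}f_{i,j}F_{j,d}$, so it suffices to prove
$$\sum_{j=i}^{d}f_{i,j}F_{j,d}=(d+1)!\,F_{i,d}\qquad(-1\le i\le d).$$
For $i=d$ both sides equal $(d+1)!$, using $f_{d,d}=(d+1)!$ and $F_{d,d}=1$. For $-1\le i\le d-1$ I would start from \eqref{ind2}, multiply through by $(d+1)!-(i+1)!$ to get $\big((d+1)!-(i+1)!\big)F_{i,d}=\sum_{j=i+1}^{d}f_{i,j}F_{j,d}$, and then add $(i+1)!\,F_{i,d}=f_{i,i}F_{i,d}$ to both sides — in the boundary case $i=-1$ this uses $f_{-1,-1}=1=0!$ — which reinstates the $j=i$ summand and produces the displayed identity. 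Hence $\bm{F}_d\bm{v}=(d+1)!\,\bm{v}$.

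The only genuinely substantive ingredient is the recursion \eqref{ind2} itself, which the excerpt states; I would want to confirm it follows from the nested-sum definition of $F_{i,d}$ by pulling the first factor $f_{i,i_1}/\big((d+1)!-(i+1)!\big)$ out of every term, recognising the remaining partial sum as $F_{i_1,d}$ (the length-one chain $i_0=i$ being handled by $F_{d,d}=1$), and summing over $i_1\in\{i+1,\dots,d\}$. Beyond that bookkeeping there is no real obstacle: the numbers $F_{i,d}$ were defined precisely so that \eqref{ind2} — equivalently the eigenvector equation — holds, and the verification is essentially a one-line rearrangement. The payoff of this lemma comes later, when this eigenvector is used to control the $H$-polynomials.
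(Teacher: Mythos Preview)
Your proposal is correct and follows essentially the same route as the paper's own proof: upper-triangularity gives the eigenvalues from the diagonal, and the eigenvector identity is exactly the recursion \eqref{ind2} rewritten with the $j=i$ term moved to the left. You spell out the algebra (multiplying through and adding back $f_{i,i}F_{i,d}$) and sketch why \eqref{ind2} follows from the nested-sum definition, whereas the paper simply invokes \eqref{ind2} without further comment; but the argument is the same.
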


For $d\ge 1$ and a permutation $\sigma$ on the set $[d]=\{ 1,2,\dots,d \}$, define des$(\sigma)$ to be the number of $1\le i\le d-1$ such that $\sigma(i)>\sigma(i+1)$. For $0\le i \le d-1$ and $1\le j\le d$, we denote by $A(d,i,j)$ the number of permutations $\sigma$ on $[d]$ such that des$(\sigma)=i$ and $\sigma(1)=j$. In particular, $A(d,i,j)=0$ if $i\le -1$.

For $d\ge 0$, define a matrix $\bm{H}_d$ by $$ \bm{H}_d =\left( h^{(d)}_{i,j} \right)_{-1\le  i,j \le  d}= \big( A(d+2,i+1,j+2) \big)_{-1\le i,j \le d}.$$For example, we have $$ \bm{H}_0 =\begin{pmatrix}  1&0 \\0&1\end{pmatrix}   , \bm{H}_1 = \begin{pmatrix}  1&0&0  \\ 1&2  &1 \\0&0&1 \end{pmatrix}  , \bm{H}_2 = \begin{pmatrix}  1&0&0 &0 \\ 4&4 &2&1 \\ 1&2&4 &4 \\ 0&0&0&1 \end{pmatrix} , $$
$$  \bm{H}_3 = \begin{pmatrix}  1&0&0&0&0 \\ 11&8&4&2&1 \\ 11&14&16&14&11 \\1&2&4&8&11 \\0&0&0&0&1 \end{pmatrix}  , \bm{H}_4=\begin{pmatrix} 1&0&0&0&0&0 \\26&16&8&4&2&1 \\66&66&60&48&36&26 \\26&36&48&60&66&66 \\ 1&2&4&8&16&26 \\0&0&0&0&0&1 \end{pmatrix} .$$ 
It is easy to compute these examples by the following lemma:
\begin{lemma}[Lemma 2(i) of \cite{BW08}]\label{BW}
For $d \ge 0$ and $-1 \le i,j \le d,$ we have $$ h^{(d)}_{i,j}  = \sum^{j-1}_{\ell =-1}   h^{(d-1)}_{i-1,\ell}   + \sum^{d-1}_{\ell =j}   h^{(d-1)}_{i,\ell} .$$
\end{lemma}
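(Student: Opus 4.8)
The plan is to prove the recursion combinatorially, via the ``delete the first letter and standardize'' operation on permutations. Recall that $h^{(d)}_{i,j}=A(d+2,i+1,j+2)$ and $h^{(d-1)}_{i,j}=A(d+1,i+1,j+2)$. Writing $n:=d+2$ and $a:=j+2$, and performing the substitution $m=\ell+2$ in the two sums on the right-hand side, the identity to be proved becomes equivalent to
\[
A(n,i+1,a)=\sum_{m=1}^{a-1}A(n-1,i,m)+\sum_{m=a}^{n-1}A(n-1,i+1,m),
\]
to hold for $n\ge 2$, $-1\le i\le n-2$, and $1\le a\le n$, under the standing convention that $A(\cdot,i,\cdot)=0$ for $i\le-1$. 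I would prove this reformulated identity; the passage back to the stated form is pure re-indexing.

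Fix a permutation $\tau$ of $[n]$ with $\tau(1)=a$. The word $\tau(2)\tau(3)\cdots\tau(n)$ is a permutation of $[n]\setminus\{a\}$, and I let $\Phi(\tau)$ denote its standardization, the permutation of $[n-1]$ obtained by replacing the $k$-th smallest entry by $k$. Then $\Phi$ is a bijection from $\{\tau:\tau(1)=a\}$ onto the set of all permutations of $[n-1]$, its inverse being un-standardization relative to $[n]\setminus\{a\}$ followed by prepending $a$. Because standardization preserves the order relation between each pair of consecutive entries, $\tau$ and $\Phi(\tau)$ have the same descents in positions $2,\dots,n-1$, so $\mathrm{des}(\tau)=\mathrm{des}(\Phi(\tau))+[\,\tau(1)>\tau(2)\,]$. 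Furthermore the entries $1,\dots,a-1$ are fixed by standardization while $a+1,\dots,n$ drop by one, so $\tau(1)>\tau(2)$ holds exactly when $\Phi(\tau)(1)\le a-1$ and $\tau(1)<\tau(2)$ exactly when $\Phi(\tau)(1)\ge a$.

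Therefore the condition $\mathrm{des}(\tau)=i+1$ divides into two disjoint cases, phrased in terms of $\sigma:=\Phi(\tau)$: either $\sigma(1)\le a-1$ and $\mathrm{des}(\sigma)=i$, or $\sigma(1)\ge a$ and $\mathrm{des}(\sigma)=i+1$. Counting, through $\Phi$, the permutations of $[n-1]$ allowed in each case produces exactly $\sum_{m=1}^{a-1}A(n-1,i,m)$ and $\sum_{m=a}^{n-1}A(n-1,i+1,m)$, which proves the identity and hence the lemma. The degenerate instances need no separate argument: when $i=-1$ the first sum vanishes term by term by the convention on $A$, and the remaining content of the identity is the statement that there is a unique increasing permutation. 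I do not anticipate a genuine obstacle; the point demanding care is the index bookkeeping — the shifts concealed in $h^{(d)}_{i,j}=A(d+2,i+1,j+2)$, the substitution $m=\ell+2$, and the empty or out-of-range sums — and I would verify the base case $d=0$ (where $\bm{H}_0$ is the $2\times 2$ identity) to confirm that the conventions for $A(\cdot,i,\cdot)$ with $i\le-1$ and for the empty permutation are compatible with the formula.
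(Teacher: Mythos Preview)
Your argument is correct. The bijection ``delete $\tau(1)$ and standardize'' is exactly the right tool: it preserves the descent pattern on positions $2,\dots,n-1$, and your case split according to whether $\sigma(1)\le a-1$ or $\sigma(1)\ge a$ correctly tracks whether a descent is lost at position~$1$. The re-indexing from $h^{(d)}_{i,j}=A(d+2,i+1,j+2)$ to the $A(n,i+1,a)$ form is accurate, and the boundary behaviour (empty sums, the convention $A(\cdot,i,\cdot)=0$ for $i\le -1$, the single permutation of $[1]$) all line up.

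Note, however, that the paper does not supply its own proof of this lemma: it is quoted verbatim as Lemma~2(i) of Brenti--Welker \cite{BW08} and used as a black box. Your proof is the standard combinatorial derivation of this recursion and is, in essence, the argument one finds in that reference. So there is nothing to compare against within the present paper; you have simply filled in a proof the author chose to cite rather than reproduce.
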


The two matrices $\bm{F}_d$ and $\bm{H}_d$ have already been used in \cite{BW08}. In the paper, Brenti and Welker found that the matrices are similar (Lemma 4 (i) of \cite{BW08}); that is, there exists a nonsingular matrix $\bm{P}$ such that $\bm{P}^{-1} \bm{F}_d \bm{P} =\bm{H}_d$. In this paper, we explicitly describe the matrices $\bm{P}$ and $\bm{P}^{-1}$.

For $d\ge 0$, define the \textit{shift matrix }$\bm{S}_d$ by $$ \bm{S}_d =\left(  (-1)^{d+1+i+j}\binom{d-j}{i+1} \right)_{-1\le i,j \le d} .$$

\begin{lemma}\label{shift}
If $d\ge 0, f(z)=\sum^d_{i=-1} a_i z^{d-i}$, and $$h(z)=f(z-1)=\sum_{i=0}^{d+1} b_i z^{d+1-i} ,$$ then $$ b_{d-i} = \sum^d_{j=-1}  (-1)^{d+1+i+j} \binom{d-j}{i+1} a_j $$ for $-1\le i \le d$ and $$ \bm{S}_d  {}^t (a_{-1} ,a_0,\dots, a_d) =(b_{d+1}, b_d,\dots, b_0).$$
\begin{proof}
We have
\begin{align*}
h(z)=& \sum^d_{j=-1}  a_j (z-1)^{d-j}  \\
=&\sum^d_{j=-1}  a_j  \sum^{d-j}_{i=-1} \binom{d-j}{i} z^i (-1)^{d+i+j} \\
=&\sum^{d+1}_{i=0} \left( \sum^{d-i}_{j=-1} (-1)^{d+i+j} \binom{d-j}{i} a_j \right)z^i.
\end{align*}
Hence, $$b_{d-i} = \sum^d_{j=-1}  (-1)^{d+1+i+j}  \binom{d-j}{i+1}  a_j ,$$ and the first result follows. Since the $i$th coordinate,$-1  \le i \le d,$ of $\bm{S}_d {}^t (a_{-1},\dots, a_d)$ is $$\sum^d_{j=-1}  (-1)^{d+1+i+j}  \binom{d-j}{i+1}  a_j ,$$
the second result follows.
\end{proof}
\end{lemma}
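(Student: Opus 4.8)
The plan is a direct computation: substitute $z-1$ for $z$ in $f$, expand each power $(z-1)^{d-j}$ by the binomial theorem, collect the coefficient of each power of $z$, and then recognize the matrix identity as merely a repackaging of the resulting coefficient formula. Concretely, I would write
$$ h(z) = f(z-1) = \sum_{j=-1}^{d} a_j (z-1)^{d-j} = \sum_{j=-1}^{d} a_j \sum_{m} \binom{d-j}{m} (-1)^{d-j-m} z^m , $$
using that $(-1)^{d-j-m} = (-1)^{d+j+m}$ since these exponents have the same parity. Interchanging the two sums, the coefficient of $z^m$ in $h(z)$ is $\sum_{j=-1}^{d} (-1)^{d+j+m} \binom{d-j}{m} a_j$, where the binomial coefficients that fall outside the range $0 \le m \le d-j$ vanish and so silently enforce the correct summation ranges (and since $i \ge -1$ throughout we only ever meet $m = i+1 \ge 0$, so no negative-index binomials occur). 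Because $h(z) = \sum_{i=0}^{d+1} b_i z^{d+1-i}$, the coefficient of $z^{i+1} = z^{d+1-(d-i)}$ is $b_{d-i}$; taking $m = i+1$ above gives $b_{d-i} = \sum_{j=-1}^{d} (-1)^{d+1+i+j} \binom{d-j}{i+1} a_j$ for $-1 \le i \le d$, the first assertion.

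For the vector identity I would just note that, by the definition of $\bm{S}_d$, the $i$-th coordinate ($-1 \le i \le d$) of $\bm{S}_d\,{}^t(a_{-1}, a_0, \dots, a_d)$ is precisely $\sum_{j=-1}^{d} (-1)^{d+1+i+j} \binom{d-j}{i+1} a_j$, which the first part identifies with $b_{d-i}$. Reading $i$ from $-1$ up to $d$ lists these coordinates as $b_{d+1}, b_d, \dots, b_0$, which is the claimed equality. As cheap consistency checks, the $i = -1$ coordinate is $\sum_j (-1)^{d+j} a_j = f(-1) = h(0) = b_{d+1}$, and the $i = d$ coordinate survives only for $j = -1$ and equals $a_{-1} = b_0$, the common leading coefficient of $f$ and $h$.

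There is essentially no obstacle in this lemma; the whole content is the change-of-basis formula between the monomial bases of polynomials centered at $0$ and at $1$, and the only care required is bookkeeping — keeping the sign parity $(-1)^{d-j-m} = (-1)^{d+j+m}$ straight, checking that the out-of-range binomial coefficients handle the degenerate terms $j = -1$ and $j = d$ uniformly, and correctly matching the reversal of index order between the input vector ${}^t(a_{-1}, \dots, a_d)$ and the output $(b_{d+1}, \dots, b_0)$. No combinatorial input (Stirling numbers, descent statistics) enters at this stage; those only become relevant once $\bm{S}_d$ is combined with $\bm{F}_d$ and $\bm{H}_d$ in the subsequent similarity argument.
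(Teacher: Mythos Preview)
Your proposal is correct and follows essentially the same approach as the paper: a direct binomial expansion of $(z-1)^{d-j}$, interchange of sums, and then reading off the matrix identity from the definition of $\bm{S}_d$. The only cosmetic difference is the choice of summation index ($m$ versus $i$) and your added sanity checks at $i=-1$ and $i=d$; the substance is identical.
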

\begin{lemma}\label{sym2}
For $d\ge 0$, the shift matrix is nonsingular, and the inverse matrix is given by $$\bm{S}_d'= \left(  \binom{j+1}{d-i} \right)_{-1\le i,j \le d} .$$
\begin{proof}
The $(i,j)$-entry of $\bm{S}_d \bm{S}_d'$ is $$ \sum_{-1\le k\le d} (-1)^{d+1+i+k}  \binom{d-k}{i+1} \binom{j+1}{d-k},$$
and we show that it is the Kronecker delta. By multiplying by $x^{i+1}$ and summing over $k\ge -1$, we have
\begin{align*}
&\sum^{\infty}_{i=-1} \sum_{-1 \le k\le d} (-1)^{d+1+i+k} \binom{d-k}{i+1} \binom{j+1}{d-k} x^{i+1}  \\
=& \sum^{\infty}_{k=-1}(-1)^{d+k} \binom{j+1}{d-k} \sum^{\infty}_{i=0}(-1)^i \binom{d-k}{i}x^i \\
=&\sum^{\infty}_{k=-1}(-1)^{d-k}\binom{j+1}{d-k} (1-x)^{d-k} \\
=&x^{j+1}.
\end{align*}
Hence, the result follows.
\end{proof}

\end{lemma}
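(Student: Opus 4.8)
The plan is to read the inverse off directly from the interpretation of $\bm{S}_d$ supplied by Lemma \ref{shift}, rather than to attack the binomial sum head on. Lemma \ref{shift} says that $\bm{S}_d$ sends the coordinate tuple $(a_{-1},\dots,a_d)$ of a polynomial $f(z)=\sum_{i=-1}^d a_iz^{d-i}$ to the \emph{reversed} coordinate tuple $(b_{d+1},\dots,b_0)$ of $h(z)=f(z-1)=\sum_{i=0}^{d+1}b_iz^{d+1-i}$; in other words $\bm{S}_d$ is the matrix of the substitution $z\mapsto z-1$ on polynomials of degree at most $d+1$, read in these two coordinate systems. Since $z\mapsto z-1$ is inverted by $z\mapsto z+1$, it suffices to check that $\bm{S}_d'$ is the matrix of $h\mapsto h(z+1)$ in the same coordinates (with domain and codomain exchanged); the two matrices then compose to the identity, so $\bm{S}_d$ is nonsingular with inverse $\bm{S}_d'$.

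First I would carry out the elementary expansion. Given $h$ with reversed coordinate tuple $(b_{d+1},\dots,b_0)$, rewrite $h(z)=\sum_{m=-1}^d c_mz^{m+1}$, where $c_m$ is the entry in position $m$ of that tuple. The binomial theorem gives
$$ h(z+1)=\sum_{m=-1}^d c_m\sum_{\ell\ge 0}\binom{m+1}{\ell}z^{\ell}=\sum_{\ell\ge 0}\Bigl(\,\sum_{m=-1}^d\binom{m+1}{\ell}c_m\Bigr)z^{\ell}. $$
Comparing the coefficient of $z^{d-i}$ with the $a_i$ of $f(z)=h(z+1)=\sum_{i=-1}^d a_iz^{d-i}$ yields $a_i=\sum_{m=-1}^d\binom{m+1}{d-i}c_m$, that is, the coordinate tuple of $f$ is the matrix $\bigl(\binom{j+1}{d-i}\bigr)_{-1\le i,j\le d}=\bm{S}_d'$ applied to that of $h$. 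Since substituting $z-1$ and then $z+1$ is the identity on polynomials of degree $\le d+1$, applying $\bm{S}_d$ and then $\bm{S}_d'$ is the identity of the $(d+2)$-dimensional coordinate space, so $\bm{S}_d'\bm{S}_d=I$; as these matrices are square, $\bm{S}_d$ is invertible with inverse $\bm{S}_d'$.

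The main obstacle, such as it is, will be the bookkeeping forced by the two clashing index conventions: the $a_i$ are indexed $-1\le i\le d$ and listed by decreasing degree, the $b_i$ are indexed $0\le i\le d+1$, and Lemma \ref{shift} builds in the reversal $(b_{d+1},\dots,b_0)$ of the codomain list. One must check that the domain and codomain bases are paired consistently for both substitutions so that ``composition of maps equals product of matrices'' holds with no stray transpose or extra reversal --- which it does, since reversing the coefficient list of a fixed-degree polynomial twice returns the original. A reader who prefers to bypass this can instead verify $\bm{S}_d\bm{S}_d'=I$ by a direct identity: its $(i,j)$-entry is $\sum_{k=-1}^d(-1)^{d+1+i+k}\binom{d-k}{i+1}\binom{j+1}{d-k}$, and multiplying by $x^{i+1}$ and summing over $i$, using $\sum_i\binom{d-k}{i}(-x)^i=(1-x)^{d-k}$ followed by $\sum_{\ell}\binom{j+1}{\ell}(x-1)^{\ell}=x^{j+1}$, collapses the generating function to $x^{j+1}$, whose coefficient of $x^{i+1}$ is $\delta_{i,j}$.
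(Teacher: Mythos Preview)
Your proof is correct. Your primary route is genuinely different from the paper's: you exploit Lemma~\ref{shift} to interpret $\bm{S}_d$ as the change-of-basis matrix for the substitution $z\mapsto z-1$ on polynomials of degree $\le d+1$, then compute directly that $\bm{S}_d'$ is the matrix of the inverse substitution $z\mapsto z+1$, so that $\bm{S}_d'\bm{S}_d=I$ follows conceptually rather than by a binomial identity. The paper instead verifies $\bm{S}_d\bm{S}_d'=I$ head on, by forming the generating function $\sum_i(\text{$(i,j)$-entry})\,x^{i+1}$ and collapsing it to $x^{j+1}$ via $\sum_i\binom{d-k}{i}(-x)^i=(1-x)^{d-k}$ and then $\sum_\ell\binom{j+1}{\ell}(x-1)^\ell=x^{j+1}$ --- exactly the alternative you sketch in your final paragraph. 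Your approach buys an explanation of \emph{why} the inverse takes this particular form (and makes the index bookkeeping the only thing to get right); the paper's buys independence from Lemma~\ref{shift} and a self-contained identity check.
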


\begin{lemma}\label{sym3}
For $d\ge 0$, we have $\bm{S}_d \bm{F}_d \bm{S}_d^{-1}  =\bm{H}_d$.
\begin{proof}
We prove the claim by induction on $d$. 

When $d=0$, it is clear.

Suppose that the equality holds for $d-1$. The $(i,j)$-entries of both sides are $$ \sum_{-1\le k,k'\le d}  (-1)^{d+1+i+k} \binom{d-k}{i+1} \binom{j+1}{d-k'}  f_{k,k'}$$
and $h^{(d)}_{i,j}$, respectively. By Lemma \ref{BW} and the assumption of induction, we have
\begin{align*}
h^{(d)}_{i,j} =&\sum^{j+1}_{\ell=1} A(d+1,i,\ell) + \sum^{d+2}_{\ell=j+3}A(d+1,i+1,\ell -1)  \\
=&\sum_{-1\le k,k'\le d-1} (-1)^{d+1+i+k} f_{k,k'}\Bigg( \sum^j_{\ell=0} \binom{d-1-k}{i} \binom{\ell}{d-1-k'} \\
& - \sum^d_{\ell=j+1} \binom{d-1-k}{i+1}\binom{\ell}{d-1-k'} \Bigg)\\
=&\sum_{-1\le k,k'\le d-1} (-1)^{d+1+i+k} f_{k,k'}\Bigg( \sum^j_{\ell=d-1-k'} \binom{d-1-k}{i} \binom{\ell}{d-1-k'} \\
&+  \sum^j_{\ell=d-1-k'} \binom{d-1-k}{i+1} \binom{\ell}{d-1-k'} - \sum^d_{\ell=d-1-k'} \binom{d-1-k}{i+1}\binom{\ell}{d-1-k'} \Bigg)\\
=&\sum_{-1\le k,k'\le d-1} (-1)^{d+1+i+k} f_{k,k'}\left(  \binom{d-k}{i+1}\binom{j+1}{d-k'} -\binom{d-1-k}{i+1}\binom{d+1}{k'+1} \right) .
\end{align*}
The last equality follows from the equality $\sum^b_{c=a}\binom{c}{a}=\binom{b+1}{a+1}$ for $a,b \ge 0$.

On the other hand, by \eqref{ind1}, we have
\begin{align*}
&\sum_{-1\le k,k' \le d} (-1)^{d+1+i+k} \binom{d-k}{i+1} \binom{j+1}{d-k'}f_{k,k'}\\
=&\sum_{-1\le k,k' \le d-1} (-1)^{d+1+i+k} \binom{d-k}{i+1} \binom{j+1}{d-k'}f_{k,k'} \\
&+\sum_{0\le k \le d} (-1)^{d+1+i+k} \binom{d-k}{i+1}  f_{k,d}\\
=&\sum_{-1\le k,k' \le d-1} (-1)^{d+1+i+k} \binom{d-k}{i+1} \binom{j+1}{d-k'}f_{k,k'} \\
& +\sum_{0\le k \le d} (-1)^{d+1+i+k} \binom{d-k}{i+1} \sum^d_{k'=k} \binom{d+1}{k'} f_{k-1,k'-1} \\
=&\sum_{-1\le k,k' \le d-1} (-1)^{d+1+i+k} \binom{d-k}{i+1} \binom{j+1}{d-k'}f_{k,k'} \\
&+ \sum_{-1\le k,k' \le d-1} (-1)^{d+i+k} \binom{d-1-k}{i+1} \binom{d+1}{k'+1}f_{k,k'} \\
=&\sum_{-1\le k,k'\le d-1} (-1)^{d+1+i+k} f_{k,k'}\left(  \binom{d-k}{i+1}\binom{j+1}{d-k'} -\binom{d-1-k}{i+1}\binom{d+1}{k'+1} \right) .
\end{align*}
Hence, the result follows.
\end{proof}
\end{lemma}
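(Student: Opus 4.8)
The plan is to prove $\bm{S}_d\bm{F}_d\bm{S}_d^{-1}=\bm{H}_d$ by induction on $d$, comparing the two sides entry by entry (here $\bm{S}_d^{-1}=\bm{S}_d'$ by Lemma \ref{sym2}). Since $\bm{F}_0=\bm{H}_0$ is the identity matrix, the base case $d=0$ is immediate. For the inductive step I would fix $-1\le i,j\le d$ and show that $(\bm{S}_d\bm{F}_d\bm{S}_d^{-1})_{i,j}$ and $h^{(d)}_{i,j}$ both equal the single explicit double sum $\sum_{-1\le k,k'\le d-1}(-1)^{d+1+i+k}f_{k,k'}\bigl(\binom{d-k}{i+1}\binom{j+1}{d-k'}-\binom{d-1-k}{i+1}\binom{d+1}{k'+1}\bigr)$, all of whose terms involve only $f_{k,k'}$ with $k,k'\le d-1$.

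The $f$-side is nearly automatic. Expanding with the closed forms for $\bm{S}_d$ and $\bm{S}_d^{-1}=\bm{S}_d'$ gives $(\bm{S}_d\bm{F}_d\bm{S}_d^{-1})_{i,j}=\sum_{-1\le k,k'\le d}(-1)^{d+1+i+k}\binom{d-k}{i+1}\binom{j+1}{d-k'}f_{k,k'}$. The portion with $k'\le d-1$ is already the first summand of the target; in the portion with $k'=d$ one has $\binom{j+1}{d-k'}=1$, and substituting the recursion \eqref{ind1} in the form $f_{k,d}=\sum_{k'}\binom{d+1}{k'}f_{k-1,k'-1}$ and re-indexing $k\mapsto k+1$, $k'\mapsto k'+1$ turns it into exactly the second summand.

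The $h$-side is where the work lies. I would apply Lemma \ref{BW} to write $h^{(d)}_{i,j}=\sum_{\ell=-1}^{j-1}h^{(d-1)}_{i-1,\ell}+\sum_{\ell=j}^{d-1}h^{(d-1)}_{i,\ell}$, then feed in the inductive hypothesis $\bm{H}_{d-1}=\bm{S}_{d-1}\bm{F}_{d-1}\bm{S}_{d-1}^{-1}$, which expresses $h^{(d-1)}_{i',\ell}$ as $\sum_{k,k'}(-1)^{d+i'+k}\binom{d-1-k}{i'+1}f_{k,k'}\binom{\ell+1}{d-1-k'}$. Performing the two $\ell$-summations, each inner sum $\sum_{\ell}\binom{\ell+1}{d-1-k'}$ collapses by the hockey-stick identity $\sum_{c=a}^{b}\binom{c}{a}=\binom{b+1}{a+1}$ once its lower limit is lowered to where the summand first becomes nonzero; the range $\ell=j,\dots,d-1$ is handled by writing it as the difference of two such sums, which is precisely what produces the $\binom{d+1}{k'+1}$ term. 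Finally, the $h^{(d-1)}_{i-1,\ell}$ block carries a $\binom{d-1-k}{i}$ while the $h^{(d-1)}_{i,\ell}$ block carries a $\binom{d-1-k}{i+1}$, and Pascal's rule $\binom{d-1-k}{i}+\binom{d-1-k}{i+1}=\binom{d-k}{i+1}$ fuses the two into the target double sum.

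The main obstacle is organizational, not conceptual: the argument is one long controlled cascade of binomial identities and it closes only if three points are handled exactly --- the signs (the two blocks coming out of Lemma \ref{BW} emerge with opposite signs, which is exactly why the $\binom{j+1}{d-k'}$ pieces add while the $\binom{d+1}{k'+1}$ piece survives with a minus), the summation bounds fed to the hockey-stick step (a single off-by-one breaks the match), and the recognition that the leftover cross-term $-\binom{d-1-k}{i+1}\binom{d+1}{k'+1}$ on the $h$-side is the contribution of the top-dimensional term of \eqref{ind1} on the $f$-side. As a structural sanity check, note that $\bm{F}_d$ is, by the very definition of the $f_{i,j}$, the map sending the $f$-vector of a $d$-dimensional $\Delta$ to that of $\Delta'$, while $\bm{S}_d$ converts an $f$-vector into the reversed $h$-vector via $z\mapsto z-1$ (Lemma \ref{shift}); hence $\bm{S}_d\bm{F}_d\bm{S}_d^{-1}$ is forced to be the map from the $h$-vector of $\Delta$ to that of $\Delta'$, and the lemma asserts that this map is the descent-statistic matrix $\bm{H}_d$. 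The induction above is the self-contained way to verify this identification with the tools already at hand.
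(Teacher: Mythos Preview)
Your proposal is correct and follows essentially the same route as the paper: induction on $d$, reducing both the $\bm{S}_d\bm{F}_d\bm{S}_d^{-1}$ side (via splitting off $k'=d$ and applying \eqref{ind1}) and the $\bm{H}_d$ side (via Lemma~\ref{BW}, the inductive hypothesis, the hockey-stick identity, and Pascal's rule) to the common double sum $\sum_{-1\le k,k'\le d-1}(-1)^{d+1+i+k}f_{k,k'}\bigl(\binom{d-k}{i+1}\binom{j+1}{d-k'}-\binom{d-1-k}{i+1}\binom{d+1}{k'+1}\bigr)$. Your identification of the three delicate points (signs, hockey-stick bounds, and the matching of the $\binom{d+1}{k'+1}$ cross-term with the top-dimensional contribution from \eqref{ind1}) is exactly where the paper's computation hinges as well.
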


A square matrix $\bm{A}=(a_{i,j})_{1\le i,j \le n}$ over a ring is \textit{rotationally symmetric} if $a_{i,j}=a_{n+1-i,n+1-j}$ for any $1\le i,j \le n$. An eigenvector for a simple eigenvalue of a rotationally symmetric matrix has the following interesting property; that is, the eigenvector is almost symmetric.

\begin{lemma}\label{sym4}
Suppose that $\bm{A}=(a_{i,j})_{1\le i,j \le n}$ is a rotationally symmetric matrix over $\mathbb{C}$, $\lambda$ is a simple eigenvalue of $\bm{A}$, and $\bm{x}={}^t (x_1, x_2,\dots,x_n)$ is an eigenvector for $\lambda$. Then, we have $x_i= \delta x_{n+1-i}$ for any $1\le i \le n$, where $\delta=\pm 1$. Moreover, if the sum $\sum_i x_i$ is nonzero, then $\delta =1$.
\begin{proof}
It is easy to show that the vector $\bm{x}'={}^t (x_n,x_{n-1}, \dots,x_1)$ is also an eigenvector for $\lambda$. Since $\lambda$ is simple, the eigenspace for $\lambda$ is a one-dimensional space; therefore, there exists a complex number $\delta$ such that $\delta \bm{x}=\bm{x}'$. Since $\bm{x}$ is an eigenvector, $x_i$ is nonzero for some $i$. Furthermore, since $\delta x_i=x_{n+1-i}$ and $\delta x_{n+1-i} =x_i$, the constant $\delta$ must be $\pm 1$. Hence, the first claim follows.

Moreover, the equality $\delta \bm{x}=\bm{x}'$ implies $\delta\sum_i x_i=\sum_i x_i$. Since $\sum_i x_i$ is nonzero, the second claim follows.
\end{proof}
\end{lemma}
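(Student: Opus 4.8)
The plan is to encode the rotational symmetry as a commutation relation with the anti-diagonal permutation matrix, and then exploit the simplicity of $\lambda$ to pin down the relevant eigenspace.

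First I would introduce the $n\times n$ matrix $\bm{J}$ whose $(i,j)$-entry is $1$ when $i+j=n+1$ and $0$ otherwise, so that $\bm{J}=\bm{J}^t=\bm{J}^{-1}$ and $\bm{J}^2=\bm{I}$. A direct computation gives $(\bm{J}\bm{A}\bm{J})_{i,j}=a_{n+1-i,\,n+1-j}$, so the hypothesis that $\bm{A}$ is rotationally symmetric is precisely the identity $\bm{J}\bm{A}\bm{J}=\bm{A}$, equivalently $\bm{A}\bm{J}=\bm{J}\bm{A}$ because $\bm{J}^2=\bm{I}$. Geometrically, $\bm{J}$ is the linear map that reverses the order of the coordinates, and rotational symmetry says exactly that $\bm{A}$ commutes with this reversal.

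Next, from $\bm{A}\bm{x}=\lambda\bm{x}$ I would deduce $\bm{A}(\bm{J}\bm{x})=\bm{J}\bm{A}\bm{x}=\lambda(\bm{J}\bm{x})$, so $\bm{J}\bm{x}$ again lies in the eigenspace of $\lambda$. Since $\lambda$ is a simple eigenvalue this eigenspace is one dimensional and already contains the nonzero vector $\bm{x}$, hence $\bm{J}\bm{x}=\delta\bm{x}$ for a unique scalar $\delta\in\mathbb{C}$. Applying $\bm{J}$ once more and using $\bm{J}^2=\bm{I}$ gives $\bm{x}=\delta^2\bm{x}$, and as $\bm{x}\neq 0$ this forces $\delta^2=1$, i.e. $\delta=\pm 1$. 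Reading off the $i$-th coordinate of $\bm{J}\bm{x}=\delta\bm{x}$ yields $x_{n+1-i}=\delta x_i$ for every $i$, which (using $\delta=\delta^{-1}$) is the claimed relation $x_i=\delta x_{n+1-i}$.

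For the final assertion I would sum the identities $x_{n+1-i}=\delta x_i$ over $i=1,\dots,n$; re-indexing the left-hand sum by $j=n+1-i$ turns it into $\sum_i x_i=\delta\sum_i x_i$, so $\delta=1$ whenever $\sum_i x_i\neq 0$. I do not expect any serious obstacle here: the only points that require a little care are the translation between ``rotationally symmetric'' and ``commutes with $\bm{J}$'', and the role of simplicity — without it one could only conclude that the $\lambda$-eigenspace is $\bm{J}$-invariant, which is not enough to make each individual eigenvector an eigenvector of $\bm{J}$ with a well-defined sign $\delta$.
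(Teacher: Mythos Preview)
Your proposal is correct and follows essentially the same approach as the paper: both observe that the coordinate-reversed vector is again an eigenvector for $\lambda$, use simplicity to write it as $\delta\bm{x}$, and then use the involutive nature of reversal to force $\delta=\pm 1$, with the sum condition singling out $\delta=1$. The only cosmetic difference is that you package the reversal as the anti-diagonal matrix $\bm{J}$ and phrase rotational symmetry as the commutation $\bm{A}\bm{J}=\bm{J}\bm{A}$, which makes the argument a touch more transparent but changes nothing substantive.
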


\begin{prop}\label{sym5}
For $d \ge 0$, the $H$-polynomial is self-reciprocal; that is, $H_{i,d} = H_{d+1-i,d}$ for any $0\le i \le d$.
\begin{proof}
By Lemma \ref{sym1} and \ref{sym3}, the matrices $\bm{F}_d$ and $\bm{H}_d$ have the same eigenvalues $0!, 1!,\dots, (d+1)!$ and the vector $$  \bm{S}_d {}^t (F_{-1,d}, F_{0,d}, \dots, F_{d,d}) =  {}^t ( H_{d+1,d} ,H_{d,d} ,\dots,  H_{0,d})$$ is an eigenvector for $(d+1)!$ of $\bm{H}_d$. Lemma 2 (ii) of \cite{BW08} implies that $\bm{H}_d$ is rotationally symmetric, and we have $$ \sum^{d+1}_{i=0}  H_{i,d} = H_d(1) =F_d(0) =1 \not =0.$$ Hence, Lemma \ref{sym4} completes this proof.
\end{proof}
\end{prop}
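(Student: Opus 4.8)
The plan is to read off the (reversed) coefficient vector of $H_d(z)$ as a distinguished eigenvector of the matrix $\bm{H}_d$, and then to deduce the palindrome symmetry of that vector from the rotational symmetry of $\bm{H}_d$. Everything needed is already in place: Lemma~\ref{sym1}, Lemma~\ref{sym3}, Lemma~\ref{shift}, and Lemma~\ref{sym4}.

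First I would note that, by Lemma~\ref{sym1}, the column vector ${}^t(F_{-1,d},F_{0,d},\dots,F_{d,d})$ of $F$-polynomial coefficients is an eigenvector of $\bm{F}_d$ for the eigenvalue $(d+1)!$. Since $\bm{F}_d$ is upper triangular with diagonal entries $0!,1!,\dots,(d+1)!$, and $(d+1)!$ is (for $d\ge1$) strictly the largest of these and occurs only once on the diagonal, this eigenvalue is \emph{simple}. Next, Lemma~\ref{sym3} gives $\bm{H}_d=\bm{S}_d\bm{F}_d\bm{S}_d^{-1}$, so $\bm{S}_d$ carries eigenvectors of $\bm{F}_d$ to eigenvectors of $\bm{H}_d$ with the same eigenvalues; hence $(d+1)!$ is also a simple eigenvalue of $\bm{H}_d$, and $\bm{S}_d\,{}^t(F_{-1,d},\dots,F_{d,d})$ is an eigenvector of $\bm{H}_d$ for it. Finally, applying Lemma~\ref{shift} with $f=F_d$ and $h=H_d=F_d(\cdot-1)$ identifies this image vector as ${}^t(H_{d+1,d},H_{d,d},\dots,H_{0,d})$, i.e. the coefficient vector of $H_d(z)$ listed from the constant term up.

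It then remains to show this eigenvector is symmetric, and here I would invoke that $\bm{H}_d$ is rotationally symmetric (Lemma~2(ii) of \cite{BW08}) together with Lemma~\ref{sym4}: an eigenvector for a simple eigenvalue of a rotationally symmetric matrix equals $\pm1$ times its own reversal, and the sign is $+1$ as soon as the entries sum to something nonzero. The relevant sum is $\sum_{i=0}^{d+1}H_{i,d}=H_d(1)=F_d(0)=F_{d,d}=1\neq0$, so the sign is $+1$ and ${}^t(H_{d+1,d},\dots,H_{0,d})$ is palindromic. Matching coordinates then yields $H_{i,d}=H_{d+1-i,d}$ for all $0\le i\le d$, i.e. $H_d(z)$ is self-reciprocal. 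For $d=0$, where $(d+1)!=1$ is a repeated eigenvalue, this argument does not apply and the degenerate polynomial $H_0(z)=1$ must be handled separately.

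I do not expect a serious obstacle inside this proposition: the substantive work has been front-loaded into Lemma~\ref{sym3} (the explicit conjugation $\bm{S}_d\bm{F}_d\bm{S}_d^{-1}=\bm{H}_d$) and into the rotational symmetry of $\bm{H}_d$. The only point that needs care here is the simplicity of the eigenvalue $(d+1)!$ — it is precisely this that makes the relevant eigenspace one-dimensional and so activates the rigidity furnished by Lemma~\ref{sym4}; everything else is bookkeeping with the definitions $H_d=F_d(\cdot-1)$ and the shift matrix.
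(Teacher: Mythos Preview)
Your proof is essentially the paper's own argument: transport the $F$-eigenvector via the conjugation $\bm{H}_d=\bm{S}_d\bm{F}_d\bm{S}_d^{-1}$, identify it with the reversed coefficient vector of $H_d$ via Lemma~\ref{shift}, and then apply Lemma~\ref{sym4} using the rotational symmetry of $\bm{H}_d$ together with $H_d(1)=F_d(0)=1\neq0$. You are in fact slightly more careful than the paper, since you make explicit the simplicity of the eigenvalue $(d+1)!$ (which Lemma~\ref{sym4} requires) and flag the degenerate case $d=0$.
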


\subsection{Iterated barycentric subdivision}

In this section, we investigate the zeros of the $h$-polynomial of $\Delta$ under iterated barycentric subdivision.

As we have seen the table of $f_{i,d}$, barycentric subdivision generates many simplices; therefore, it is not easy to count the number of $i$-simplices of $\Delta^{(k)}$ precisely. However, if $k$ is sufficiently large, it is approximated by $f^{\Delta}_d F_{i,d} (d+1)!^k$.

\begin{lemma}\label{order}
For $0\le i \le d,$ we have $$f^{\Delta^{(k)}}_i=\sum_{j=0}^{d-i}  C_{j,i}(d+1-j)!^k  $$
for some rational numbers $C_{j,i}$. In particular, $C_{0,i}=f^{\Delta}_d F_{i,d}$. Hence, we have $$ f^{\Delta^{(k)}}_i = f^{\Delta}_d F_{i,d} (d+1)!^k +O \left( d!^k\right)$$ as $k\to \infty$.
\begin{proof}

We have the following recurrence: $$ \bm{F}_d  \begin{pmatrix}  f^{\Delta^{(k)}}_{-1} \\ f^{\Delta^{(k)}}_{0}  \\ \vdots \\ f^{\Delta^{(k)}}_{d}  \end{pmatrix} =\begin{pmatrix}  f^{\Delta^{(k+1)}}_{-1}  \\ f^{\Delta^{(k+1)}}_{0}  \\ \vdots \\ f^{\Delta^{(k+1)}}_{d} \end{pmatrix} .$$ Consider the generating function $$ M_i(x) = \sum^{\infty}_{k=0} f^{\Delta^{(k)}}_{i} x^k$$ for $0\le i \le d.$ We show that \begin{align} M_i(x) &= \sum^{d-i}_{j=0}  \frac{  C_{j,i}  }{ 1-(d+1-j)! x   }  \label{gen1}\end{align} for some rational numbers $C_{j,i}$ and,  in particular, \begin{align} C_{0,i} &= f^{\Delta}_d  F_{i,d}  \label{gen2}\end{align} by descending induction on $i$.

When $i=d$, multiply the recurrence $$f_{d,d} f_d^{\Delta^{(k)}} = (d+1)! f_d^{\Delta^{(k)}}  =  f_d^{\Delta^{(k+1)}} $$ by $x^k$ and sum over $k \ge 0$, and we have
\begin{align*}
(d+1)! M_d(x)&= \frac{  M_d(x) -f^{\Delta}_d }{ x  } \\
M_d&= \frac{ f^{\Delta}_d  }{  1-(d+1)!x   }.
\end{align*}
Since $F_{d,d}=1$, the claim follows.

Suppose that the claim is true for $i+1, i+2,\dots, d$. By multiplying the recurrence $$ f^{\Delta^{(k+1)}}_{i} = f_{i,i} f^{\Delta^{(k)}}_{i}   +f_{i,i+1}  f^{\Delta^{(k)}}_{i+1} +\dots +f_{i,d} f^{\Delta^{(k)}}_{d} $$ by $x^k$ and sum over $k \ge 0$, we have \begin{align*} M_i(x)  = \frac{ f^{\Delta}_i  }{ 1-(i+1)! x   }   +  \sum^d_{j=i+1} \sum^{d-j}_{\ell =0}  \frac{  f_{i,j} C_{\ell, j} x  }{  (1-(i+1)!x) (1-(d+1-\ell )!x)  } .\end{align*} By partial fraction decomposition, the first claim follows, and we have \begin{align*}  & \sum^d_{j=i+1}  \frac{  f_{i,j}  }{  1-(i+1)!x  }   \frac{  C_{0,j} x  }{ 1-(d+1)!x   }  \\  =&\sum^d_{j=i+1}  \frac{ f_{i,j}  C_{0,j}  }{  (d+1)!-(i+1)!    }  \left(  \frac{-1}{1-(i+1)!x)} + \frac{1}{  1-(d+1)!x  } \right) .\end{align*} By the assumption of induction and \eqref{ind2}, we have \begin{align*}  C_{0,i}&= \sum^d_{j=i+1}  \frac{f_{i,j} C_{0,j} }{   (d+1)!-(i+1)! }   \\ &=\frac{   f_d^{\Delta} }{ (d+1)!-(i+1)!   } \sum^d_{j=i+1} f_{i,j}  F_{j,d } \\ &=f_d^{\Delta} F_{i,d},\end{align*} and the claim follows.
\end{proof}
\end{lemma}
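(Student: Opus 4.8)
The plan is to turn barycentric subdivision into a single linear map on face vectors and then read off the asymptotics from the spectral structure of that map. First I would record the elementary identity $f^{\Delta'}_i=\sum_{j=i}^{d} f_{i,j}\,f^{\Delta}_j$, valid for $-1\le i\le d$ once we use the conventions $f_{-1,-1}=1$ and $f_{-1,j}=0$ for $j\ge 0$: each $j$-simplex $\sigma$ of $\Delta$ contributes exactly $f_{i,j}$ chains $\sigma_0\subsetneq\cdots\subsetneq\sigma_i$ in $\Delta'$ with top element $\sigma_i=\sigma$, and every $i$-simplex of $\Delta'$ arises in this way for a unique $\sigma$. In matrix form this says $\bm F_d\,{}^t(f^{\Delta}_{-1},\dots,f^{\Delta}_d)={}^t(f^{\Delta'}_{-1},\dots,f^{\Delta'}_d)$, hence ${}^t(f^{\Delta^{(k)}}_{-1},\dots,f^{\Delta^{(k)}}_d)=\bm F_d^{\,k}\,{}^t(f^{\Delta}_{-1},\dots,f^{\Delta}_d)$.

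Since $\bm F_d$ is upper triangular with the \emph{distinct} diagonal entries $0!,1!,\dots,(d+1)!$, it is diagonalizable over $\mathbb{Q}$, so each coordinate $f^{\Delta^{(k)}}_i$ is a $\mathbb{Q}$-linear combination of $k$-th powers of eigenvalues. Upper triangularity forces the $i$-th coordinate of $\bm F_d^{\,k}\bm v$ to depend only on $v_i,\dots,v_d$, and a short induction shows it involves only the eigenvalues $(i+1)!,\dots,(d+1)!$; reindexing gives the asserted shape $f^{\Delta^{(k)}}_i=\sum_{j=0}^{d-i}C_{j,i}(d+1-j)!^{\,k}$ with $C_{j,i}\in\mathbb{Q}$. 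The concrete route I would actually write down is to form the generating functions $M_i(x)=\sum_{k\ge 0}f^{\Delta^{(k)}}_i x^k$, convert the recurrence into $M_i(x)=\dfrac{f^{\Delta}_i}{1-(i+1)!x}+\sum_{j>i}\dfrac{f_{i,j}x}{1-(i+1)!x}\,M_j(x)$, and peel off the poles by partial fractions, working downward from $i=d$ where $M_d(x)=f^{\Delta}_d/(1-(d+1)!x)$ exactly; this simultaneously proves the rational-function form and sets up the computation of the top coefficient.

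The only quantity requiring genuine identification is the leading coefficient $C_{0,i}$ (the residue at $x=1/(d+1)!$). Matching the coefficient of $(d+1)!^{\,k}$ on the two sides of $f^{\Delta^{(k+1)}}_i=\sum_{j\ge i}f_{i,j}f^{\Delta^{(k)}}_j$ yields $\big((d+1)!-(i+1)!\big)C_{0,i}=\sum_{j=i+1}^d f_{i,j}C_{0,j}$, which is exactly the defining recurrence \eqref{ind2} for $F_{i,d}$ scaled by the base case $C_{0,d}=f^{\Delta}_d=f^{\Delta}_d F_{d,d}$; descending induction on $i$ then gives $C_{0,i}=f^{\Delta}_d F_{i,d}$. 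Finally, every remaining eigenvalue occurring is at most $d!$, so $f^{\Delta^{(k)}}_i=f^{\Delta}_d F_{i,d}\,(d+1)!^{\,k}+O(d!^{\,k})$.

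I expect the bookkeeping around the $-1$ index (so that $\bm F_d$ really governs the recurrence, including the empty simplex) and the downward induction extracting $C_{0,i}$ to be the only places needing care; the partial-fraction algebra is routine once the recurrence for $M_i$ is in hand, and the error estimate is immediate from $(d+1-j)!\le d!$ for $j\ge 1$.
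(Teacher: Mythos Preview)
Your proposal is correct and follows essentially the same route as the paper: the matrix recurrence $\bm F_d\,{}^t(f^{\Delta^{(k)}}_{-1},\dots,f^{\Delta^{(k)}}_d)={}^t(f^{\Delta^{(k+1)}}_{-1},\dots,f^{\Delta^{(k+1)}}_d)$, generating functions $M_i(x)$, partial fractions, and descending induction on $i$ with base case $M_d(x)=f^{\Delta}_d/(1-(d+1)!x)$. Your identification of $C_{0,i}$ by matching the coefficient of $(d+1)!^k$ directly in the recurrence is a slight streamlining of the paper's version, which instead reads off the coefficient of $1/(1-(d+1)!x)$ after the partial-fraction step, but both reduce immediately to the recursion \eqref{ind2} and are the same argument.
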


\begin{proof}[Proof of Theorem \ref{main}]

We have $$\begin{pmatrix} h^{\Delta^{(k)}}_{d+1} \\ \vdots \\ h^{\Delta^{(k)}}_{0}  \end{pmatrix} =\bm{S}_d \bm{F}_d^k\begin{pmatrix}    f^{\Delta}_{-1 }  \\ \vdots \\ f^{\Delta}_{0}  \end{pmatrix} =  \bm{S}_d\begin{pmatrix}   f^{\Delta^{(k)}}_{-1} \\ \vdots \\ f^{\Delta^{(k)}}_{0}  \end{pmatrix} .$$ Lemma \ref{shift} and \ref{order} imply
\begin{align*}
h^{\Delta^{(k)}}_{d-i}=& \sum^d_{j=-1} (-1)^{d+1+i+j} \binom{d-j}{i+1} f^{\Delta^{(k)}}_{j} \\
=&\sum^d_{j=-1} (-1)^{d+1+i+j} \binom{d-j}{i+1} \left( f^{\Delta}_d F_{j,d} (d+1)!^k + O\left( d!^k\right) \right) \\
=&f^{\Delta}_d H_{d-i,d}(d+1)!^k + O\left( d!^k\right)
\end{align*}
for $0\le i \le d .$ If $i=-1$, then $h_{d+1}^{\Delta^{(k)}} =(-1)^d \wchi(\Delta)$. Hence, we have 
\begin{align*}
h^{\Delta^{(k)}}(z)=& \sum_{i=0}^{d+1}  h^{\Delta^{(k)}}_{ d+1-i  } z^i\\
=& (-1)^d \wchi(\Delta) +\sum_{i=1}^{d+1}  h^{\Delta^{(k)}}_{ d+1-i  } z^i  \\
=& (-1)^d \wchi(\Delta) +  f_d^{\Delta}  (d+1)!^k\sum_{i=1}^{d+1}  H_{ d+1-i ,d } z^i  +\varepsilon_d(z) O\left( d!^k\right). \\
\end{align*}
Since $H_{0,d}=F_{-1,d}=0$, Proposition \ref{sym5} implies that $H_{d+1,d}$ is also zero. Hence, we have $$ h^{\Delta^{(k)}} (z) =(-1)^d \wchi(\Delta) +  f_d^{\Delta}  (d+1)!^k H_d(z)+  \varepsilon_d(z) O\left( d!^k\right). $$By the definitions of $H_d(z)$ and $F_{0,d}$, we have $H_{1,d} =F_{0,d} >0$. Since $H_d(z)$ is self-reciprocal by Proposition \ref{sym5}, we have $$ H_d(z) =H_{1,d} z \prod_n (z-\theta_n)^{e_n} ,$$
where $1+\sum_n e_n =d.$ 

Suppose that $R>0$ is sufficiently large such that the open ball $U(0: R)$ with the center zero of radius $R$ contains all zeros of $H_d(z)$. If $k$ is sufficiently large, then we have 
$$ |(d+1)!^k f^{\Delta}_d  H_d(z) | > | \wchi(\Delta)| +| \varepsilon(z) O\left( d!^k\right) | $$ on the circle $|s|=R$. Hence, Rouche's theorem implies that $h^{\Delta^{(k)}} (z)$ has exactly $d$ zeros in $U(0:R)$, however, the degree of $h^{\Delta^{(k)}} (z)$ is $d+1$; therefore, one of the zeros of $h^{\Delta^{(k)}} (z)$ lies in the exterior of $U(0:R)$. Hence, $| \rho^{\Delta^{(k)}}_{\infty}  |$ tends to $\infty$ as $k\to \infty$. Furthermore, since $h^{\Delta^{(k)}} (z)$ is a polynomial with real coefficients, the complex conjugation of $ \rho^{\Delta^{(k)}}_{\infty}  $ is also a zero of $h^{\Delta^{(k)}} (z)$, and they must coincide. Namely, $ \rho^{\Delta^{(k)}}_{\infty}  $ is real if $k$ is sufficiently large.

Suppose that $\varepsilon >0$ is sufficiently small such that the open ball $U(\theta_n : \varepsilon)$ does not intersect with $U(\theta_m : \varepsilon)$ if $n \not=m.$ If $k$ is sufficiently large, then the inequality above holds on the circle   $|s-\theta_n| =\varepsilon$. Hence, Rouche's theorem implies that $h^{\Delta^{(k)}} (z)$ has $e_n$ zeros in $U(\theta_n :\varepsilon)$. Hence, $e_n$ zeros of $h^{\Delta^{(k)}} (z)$ converge to $\theta_n$ as $k\to \infty$. Hence, we have $$ H_d(z)=   H_{1,d} \lim_{k\to \infty} \prod_{i=0}^{d-1} \left(z-\rho^{\Delta^{(k)}}_{i}  \right) . $$ Since $\prod_n \theta_n =(-1)^{d-1}$, the product $\prod^{d-1}_{i=1} \rho_i^{\Delta^{(k)}} $ converges to $(-1)^{d-1}$. Hence, the result follows.
\end{proof}

\begin{proof}[Proof of Theorem \ref{main2}]
Since any $h$-polynomial is monic, we have $$ h^{\Delta^{(k)}} (z)=\left(z-\rho^{\Delta^{(k)}}_{\infty} \right)\left(z-\rho^{\Delta^{(k)}}_{0} \right) \prod_{i=1}^{d-1} \left(z-\rho^{\Delta^{(k)}}_{i} \right) .$$
The constant term of $ h^{\Delta^{(k)}} (z)$ is $(-1)^d \wchi(\Delta)$, and the coefficient of $z^d$ of $ h^{\Delta^{(k)}} (z)$ is $ f^{\Delta^{(k)}}_0 -(d+1)$. Hence, we have 
\begin{align}
\wchi(\Delta)=&-\rho^{\Delta^{(k)}}_{\infty}\rho^{\Delta^{(k)}}_{0} \prod_{i=1^{d-1}} \rho^{\Delta^{(k)}}_{i} \label{yuru1} \\
(d+1)-f^{\Delta^{(k)}}_{0}=&\rho^{\Delta^{(k)}}_{\infty}+\rho^{\Delta^{(k)}}_{0}+\sum_{i=1}^{d-1} \rho^{\Delta^{(k)}}_{i} \label{yuru2}.
\end{align}
Since $\rho^{\Delta^{(k)}}_{0}$ and $\rho^{\Delta^{(k)}}_{i}$ converge and $d+1$ is fixed, \eqref{yuru2} implies $$\rho^{\Delta^{(k)}}_{\infty}\sim -f^{\Delta^{(k)}}_{0}$$ as $k\to \infty.$ By Lemma \ref{order}, we obtain $$ \rho^{\Delta^{(k)}}_{\infty} \sim - H_{1,d} f^{\Delta}_d (d+1)!^k.$$ Since $\prod^{d-1}_{i=1} \rho^{\Delta^{(k)}}_{i}$converges $(-1)^{d-1}$, we obtain the second result.
\end{proof}

\subsection{  The numbers $H_{1,d}$ }

In this section, we estimate the growth of $H_{1,d}$ as $d\to \infty$.

\begin{lemma}\label{2-power}
For $0 \le j \le d,$ we have $h^{(d)}_{0,j} =2^{d-j}.$
\begin{proof}
We prove the claim by induction on $d$.

If $d=0$, then $h^{(0)}_{0,0}=1.$ Hence, the claim is true.

Suppose that the claim is true for $d-1.$ By Lemma \ref{BW}, we have \begin{align*} h^{(d)}_{0,j} =&\sum^{j-1}_{\ell =-1} h^{(d-1)}_{-1,\ell}  + \sum^{d-1}_{\ell =j} h^{(d-1)}_{0,\ell} \\=&1+\sum^{d-1}_{\ell = j}2^{d-1-\ell} \\=&2^{d-j}. \end{align*} Hence, the result follows.
\end{proof}
\end{lemma}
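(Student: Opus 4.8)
The plan is to prove $h^{(d)}_{0,j} = 2^{d-j}$ by induction on $d$, feeding the recursion of Lemma \ref{BW} with $i = 0$. The base case $d = 0$ amounts to $h^{(0)}_{0,0} = 1 = 2^{0}$, which one reads off the matrix $\bm{H}_0$ (equivalently, $h^{(0)}_{0,0} = A(2,1,2)$ counts the single permutation $(2,1)$ of $[2]$, which has exactly one descent and begins with $2$).

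For the inductive step, specialize Lemma \ref{BW} to $i = 0$:
$$ h^{(d)}_{0,j} = \sum_{\ell=-1}^{j-1} h^{(d-1)}_{-1,\ell} + \sum_{\ell=j}^{d-1} h^{(d-1)}_{0,\ell}. $$
The first sum is handled by identifying the boundary row: $h^{(d-1)}_{-1,\ell} = A(d+1,0,\ell+2)$ counts permutations of $[d+1]$ with no descents, and the only such permutation is the identity, which has first value $1$; hence $h^{(d-1)}_{-1,\ell}$ equals $1$ when $\ell = -1$ and $0$ otherwise, so $\sum_{\ell=-1}^{j-1} h^{(d-1)}_{-1,\ell} = 1$ for every $j \ge 0$. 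The second sum is evaluated by the inductive hypothesis $h^{(d-1)}_{0,\ell} = 2^{d-1-\ell}$, which is a geometric series: $\sum_{\ell=j}^{d-1} 2^{d-1-\ell} = 2^{d-j} - 1$. Adding the two contributions gives $h^{(d)}_{0,j} = 1 + (2^{d-j}-1) = 2^{d-j}$.

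A purely combinatorial alternative is also available and may be worth recording: $h^{(d)}_{0,j} = A(d+2,1,j+2)$ is the number of permutations $\sigma$ of $[d+2]$ with a single descent and $\sigma(1) = j+2$. Such a $\sigma$ is determined by the set $S$ of values occupying its initial increasing run, which must satisfy $\min(S) = j+2$; since $j+2 \ge 2$ forces $1 \notin S$, the run is never an initial segment $\{1,\dots,|S|\}$, so exactly one genuine descent always occurs and no extra condition is needed. Counting subsets of $\{j+2,\dots,d+2\}$ that contain $j+2$ yields $2^{d-j}$. I do not expect a real obstacle here; the only subtlety is the bookkeeping at the boundary row $i-1 = -1$ in Lemma \ref{BW}, i.e. recognizing $h^{(d-1)}_{-1,\ell}$ as a Kronecker delta so that the first sum collapses to $1$. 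The point of the lemma is that, via the shift-matrix change of basis relating the $F$- and $H$-data to the entries $h^{(d)}_{i,j}$, control of the $i = 0$ row is exactly what is needed to estimate $H_{1,d} = F_{0,d}$ in the sequel.
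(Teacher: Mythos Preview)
Your proof is correct and follows exactly the same inductive route as the paper: induction on $d$, base case $h^{(0)}_{0,0}=1$, and the recursion from Lemma~\ref{BW} at $i=0$ yielding $1+\sum_{\ell=j}^{d-1}2^{d-1-\ell}=2^{d-j}$. You supply the justification the paper leaves implicit (that $h^{(d-1)}_{-1,\ell}$ is the Kronecker delta at $\ell=-1$), and your alternative bijective count of $A(d+2,1,j+2)$ via the first increasing run is a correct and pleasant bonus, but the core argument is the same.
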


\begin{lemma}\label{simply}
For $d \ge 1,$ we have \begin{multline*}  h^{(d)}_{0,d} \le h^{(d)}_{0,d-1} \le \dots \le h^{(d)}_{0,-1} \le h^{(d)}_{1,d}\le \dots \le h^{(d)}_{1,-1} \le \dots \le  h^{(d)}_{  \left[  \frac{d-1}{2}\right] , d} \le \dots \le  h^{(d)}_{  \left[  \frac{d-1}{2}\right] , N} , \end{multline*}where $$N= \begin{cases} -1 & \text{if $d$ is even} \\ \left[  \frac{d-1}{2}\right] & \text{if $d$ is odd.} \end{cases}$$
\begin{proof}
When $d=1$, the sequence is $h^{(1)}_{0,1} =1 \le 2 = h^{(1)}_{0,0}.$ Hence, the claim follows.

Assume that the claim is true for $d-1$ and $d$ is even. Then, the assumption of induction and Lemma \ref{BW} imply $h^{(d)}_{i-1,-1}=h^{(d)}_{i,d}$ for any $0\le i \le \left[  \frac{d-1}{2}\right]$ and 
$$  h^{(d)}_{i,j-1} -h^{(d)}_{i,j} =-h^{(d-1)}_{i-1,j-1} +h^{(d-1)}_{i,j-1} \ge -h^{(d-1)}_{i-1,-1}+h^{(d-1)}_{i,d-1} =0 $$ for any $0 \le j \le d$ (Lemma \ref{sym5} is used if $0 \le j \le \left[  \frac{d-1}{2}\right]$ ).

If $d$ is odd, for any $0 \le i <\left[  \frac{d-1}{2}\right]$ and $0\le j \le d,$ Lemma \ref{sym5} implies 
\begin{align*}
h^{(d)}_{i,j-1} -h^{(d)}_{i,j}=&-h^{(d-1)}_{i-1,j-1} + h^{(d-1)}_{i,j-1} \\
=&-h^{(d-1)}_{i-1,j-1} +h^{(d-1)}_{d-1-i-1, d-1-(j-1)-1}\\
=&-h^{(d-1)}_{d'-1,j-1} +h^{(d-1)}_{d'-1, 2d'-1}, 
\end{align*}
where $d=2d'+1.$ Since $j-1 \ge 2d' -j,$ we have $h^{(d)}_{i,j-1} \ge h^{(d)}_{i,j}$. Hence, the result follows.
\end{proof}
\end{lemma}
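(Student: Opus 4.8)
The plan is to prove the whole chain by induction on $d$, with the engine being the Brenti--Welker recurrence (Lemma~\ref{BW}) together with the rotational symmetry of $\bm{H}_d$ established in the proof of Proposition~\ref{sym5}. The base case $d=1$ is just $h^{(1)}_{0,1}=1\le 2=h^{(1)}_{0,0}$, read off from $\bm{H}_1$. For the inductive step I would first note that the displayed chain is assembled out of only two kinds of elementary comparison: the \emph{within-row} inequalities $h^{(d)}_{i,j-1}\ge h^{(d)}_{i,j}$ for each fixed row $i$ with $0\le i\le\left[\frac{d-1}{2}\right]$, and the \emph{row-linking} inequalities $h^{(d)}_{i-1,-1}\le h^{(d)}_{i,d}$ that splice consecutive rows together.

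The row links turn out to be equalities: applying Lemma~\ref{BW} at the boundary columns $j=d$ and $j=-1$ annihilates one of the two sums in each case, and both $h^{(d)}_{i,d}$ and $h^{(d)}_{i-1,-1}$ collapse to $\sum_{\ell=-1}^{d-1}h^{(d-1)}_{i-1,\ell}$. For the within-row comparison, Lemma~\ref{BW} telescopes the difference down one dimension,
\begin{equation*}
h^{(d)}_{i,j-1}-h^{(d)}_{i,j}=h^{(d-1)}_{i,j-1}-h^{(d-1)}_{i-1,j-1},
\end{equation*}
so everything reduces to the assertion that in $\bm{H}_{d-1}$ moving down one row within a fixed column never decreases the entry. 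But that is exactly what the inductive chain for $d-1$ encodes, via
\begin{equation*}
h^{(d-1)}_{i-1,j-1}\le h^{(d-1)}_{i-1,-1}\le h^{(d-1)}_{i,d-1}\le h^{(d-1)}_{i,j-1},
\end{equation*}
the outer inequalities being within-row monotonicity of rows $i-1$ and $i$ and the middle one a row link, one level down. (For the first row $i=0$ the subtracted term $h^{(d-1)}_{-1,\cdot}$ is $0$ except at its $-1$ entry, where it is $1$, so only the step at $j=0$ needs a one-line sign check; Lemma~\ref{2-power} moreover pins the interior of row $0$ to the explicit values $2^{d-j}$.)

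The hard part will be matching up the ranges across the parity change, since the inductive hypothesis for $d-1$ orders only the ``upper half'' of $\bm{H}_{d-1}$: rows up to $\left[\frac{d-2}{2}\right]$, and in the terminal row only down to its prescribed column $N'$. When $d$ is even, every row and every column needed in the estimates above already lies inside the controlled region, so the argument runs verbatim. When $d$ is odd there is a genuinely new middle row $i=\left[\frac{d-1}{2}\right]$ not present in the previous chain; for it I would use the rotational symmetry of $\bm{H}_{d-1}$ to replace the entries $h^{(d-1)}_{\cdot,\cdot}$ occurring in the telescoped difference by their mirror images, which land in a row and column the $d-1$ chain does control, and use the rotational symmetry of $\bm{H}_d$ itself to see that this new row is a palindrome about its centre column $j=\left[\frac{d-1}{2}\right]$, so that monotonicity only down to $N=\left[\frac{d-1}{2}\right]$ rather than all the way to $j=-1$ is what the statement asks for. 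Stringing the within-row chains for rows $0,1,\dots,\left[\frac{d-1}{2}\right]$ together through the row links then reproduces the displayed inequality, completing the induction.
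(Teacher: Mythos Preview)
Your proposal is correct and follows essentially the same route as the paper: induction on $d$, Lemma~\ref{BW} to telescope the within-row difference to $h^{(d-1)}_{i,j-1}-h^{(d-1)}_{i-1,j-1}$, the row links $h^{(d)}_{i-1,-1}=h^{(d)}_{i,d}$, and rotational symmetry of $\bm H_{d-1}$ (and of $\bm H_d$) to handle the parity boundary in the odd case. If anything, your outline is more explicit than the paper's about the $i=0$ edge case and about exactly which columns of the terminal row require the symmetry reflection.
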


\begin{lemma}\label{det}
Let $$  M=\begin{pmatrix} -A_1&a_{1,2}&\cdots&a_{1,n} \\a_{2,1} &-A_2&&\vdots \\\vdots&& \ddots &\vdots \\a_{n,1} &\cdots&\cdots&-A_n \end{pmatrix}$$ be an $n\times n$ matrix such that $A_j$ and $a_{i,j}$ are positive real numbers for any $i$ and $j$. Suppose that $\sum^n_{i=1, i\not = j} a_{i,j} < A_j$ for any $j$.
\begin{enumerate}
\item The sign of the determinant of $M$ is $(-1)^n$. 
\item If we give positive real numbers $b_1, b_2,\dots, b_n$ and replace the $j$th column of $M$, $1\le j \le n,$ by ${}^t (-b_1, -b_2,\dots, -b_n)$, denote the matrix $M_j$, then the sign of the determinant of $M_j$ is also $(-1)^n$.
\end{enumerate}
\begin{proof}
We only give a proof of the first claim since the second can be proved similarly.

We prove it by induction on the size of the matrix.

If $n=1$, then we have $|M|=-A_1.$ Hence, the claim follows.

If we assume the truth of the claim for $n-1$, then we have
\begin{align*}
|M|=&\begin{vmatrix} -A_1 &a_{1,2}&a_{1,3}&\cdots&a_{1,n} \\ 0&-A_2+\frac{a_{2,1}}{A_1} a_{1,2}  & a_{2,3}+\frac{a_{2,1}}{A_1} a_{1,3}&\cdots&a_{2,n}+\frac{a_{2,1}}{A_1} a_{1,n} \\
0&a_{3,2}+\frac{a_{3,1}}{A_1} a_{1,2}&-A_3+\frac{a_{3,1}}{A_1} a_{1,3}&\cdots&a_{3,n}+\frac{a_{3,1}}{A_1} a_{1,n} \\
\vdots&\vdots&\vdots&\ddots&\vdots \\
0&a_{n,2}+ \frac{ a_{n,1} }{A_1}  a_{1,2}&a_{n,3} +\frac{ a_{n,1} }{A_1} a_{1,3}&\cdots&-A_n+\frac{ a_{n,1} }{A_1} a_{1,n}
\end{vmatrix} \\
=&-A_1\begin{vmatrix} a_{1,2}&a_{1,3}&\cdots&a_{1,n} \\ 
-A_2+\frac{a_{2,1}}{A_1} a_{1,2}  & a_{2,3}+\frac{a_{2,1}}{A_1} a_{1,3}&\cdots&a_{2,n}+\frac{a_{2,1}}{A_1} a_{1,n} \\
a_{3,2}+\frac{a_{3,1}}{A_1} a_{1,2}&-A_3+\frac{a_{3,1}}{A_1} a_{1,3}&\cdots&a_{3,n}+\frac{a_{3,1}}{A_1} a_{1,n} \\
\vdots&\vdots&\ddots&\vdots \\
a_{n,2}+ \frac{ a_{n,1} }{A_1}  a_{1,2}&a_{n,3} +\frac{ a_{n,1} }{A_1} a_{1,3}&\cdots&-A_n+\frac{ a_{n,1} }{A_1} a_{1,n}.
\end{vmatrix} 
\end{align*}
For any $2\le j\le n,$ the $j$th diagonal entry is negative and the sum of the $j$th column is 
$$-A_j+\sum^n_{i=2,i\not= j} a_{i,j} +\frac{a_{1,j}}{A_1}  \sum^n_{i=2} a_{i,1} <-A_j + \sum^n_{i=1, i\not = j} a_{a,j} <0. $$
Hence, the assumption of induction implies $\mathrm{sign} |M|= -1\times (-1)^{n-1} =(-1)^n,$ and the result follows.
\end{proof}
\end{lemma}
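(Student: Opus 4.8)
My plan is to read the hypothesis as a diagonal-dominance statement about the matrix $B:=-M$, whose diagonal entries are the positive numbers $A_j$, whose off-diagonal entries are $-a_{i,j}$, and for which the assumption $\sum_{i\ne j}a_{i,j}<A_j$ says precisely that $B$ is strictly diagonally dominant down every column. For part (1) this already finishes things: the matrix $B(t)$ obtained from $B$ by multiplying every off-diagonal entry by $t$ is strictly column-diagonally dominant for every $t\in[0,1]$, hence nonsingular (a strictly diagonally dominant matrix is invertible, applied to the transpose), so $t\mapsto\det B(t)$ is continuous and nowhere zero on $[0,1]$; since $\det B(0)=\prod_jA_j>0$ we get $\det B>0$, whence $\operatorname{sign}\det M=\operatorname{sign}\bigl((-1)^n\det B\bigr)=(-1)^n$.

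For part (2) the replaced column breaks the diagonal dominance, so I would argue by induction on $n$ via a single step of Gaussian elimination on the first column. A preliminary reduction: a simultaneous permutation of rows and columns by one common permutation leaves $\det M_j$ unchanged, preserves the sign pattern and every column-sum inequality, and only moves the all-negative column $j$ to another all-negative column, so for $n\ge 2$ we may assume $j\ne 1$. Now replace the $i$-th row by itself plus $\tfrac{a_{i,1}}{A_1}$ times the first row for each $i\ge 2$, clearing the first column below its top entry, and expand along it: $\det M_j=-A_1\det M_j'$, where $M_j'$ is the $(n-1)\times(n-1)$ matrix, indexed by $2\le i,k\le n$, with off-diagonal entries $a_{i,k}+\tfrac{a_{i,1}}{A_1}a_{1,k}>0$, diagonal entries $-A_i+\tfrac{a_{i,1}}{A_1}a_{1,i}$, and $j$-th column $-(b_i+\tfrac{a_{i,1}}{A_1}b_1)<0$.

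The heart of the step, and the main obstacle, is to check that $M_j'$ again satisfies the hypotheses. Its diagonal entries are negative because $a_{i,1}<A_1$ and $a_{1,i}<A_i$ (each is one of the summands in a column-sum inequality), so $\tfrac{a_{i,1}}{A_1}a_{1,i}<a_{1,i}<A_i$; the $j$-th column is visibly all-negative; and the one delicate point is that for each column $k\ne j$ the sum of the off-diagonal absolute values stays strictly below the new diagonal absolute value $A_k-\tfrac{a_{k,1}}{A_1}a_{1,k}$. This is exactly where the hypothesis is spent: separating off the $i=1$ terms of the inequalities for columns $k$ and $1$ gives $\sum_{i\ge 2,\,i\ne k}\bigl(a_{i,k}+\tfrac{a_{1,k}}{A_1}a_{i,1}\bigr)<(A_k-a_{1,k})+\tfrac{a_{1,k}}{A_1}(A_1-a_{k,1})=A_k-\tfrac{a_{k,1}}{A_1}a_{1,k}$, as needed. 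Granting this, the induction hypothesis gives $\operatorname{sign}\det M_j'=(-1)^{n-1}$, hence $\operatorname{sign}\det M_j=\operatorname{sign}(-A_1)\cdot(-1)^{n-1}=(-1)^n$; the base case $n=1$ is immediate since $\det=-b_1<0$. The same induction with no column replaced re-proves part (1) directly, and conversely part (2) also follows in one line from inverse-positivity of nonsingular $M$-matrices: $\operatorname{adj}B\ge 0$ entrywise and is invertible, so expanding along the $j$-th column the determinant of $B$ with that column replaced by the positive vector $b$ equals $(\operatorname{adj}B\cdot b)_j>0$, which in turn equals $(-1)^n\det M_j$.
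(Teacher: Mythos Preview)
Your argument is correct. For part~(1) you take a genuinely different route from the paper: instead of the paper's induction via one step of Gaussian elimination on the first column, you observe that $B=-M$ is strictly column-diagonally dominant and deform it along the path $B(t)$ of nonsingular matrices to $\operatorname{diag}(A_1,\dots,A_n)$, reading off the sign of $\det B$ by continuity. This is shorter and more conceptual, while the paper's approach has the advantage of being entirely self-contained (no appeal to the Levy--Desplanques theorem) and of extending verbatim to part~(2). Your inductive treatment of part~(2) --- reducing to $j\ne1$ by a simultaneous row-and-column permutation, clearing the first column, and checking that the resulting $(n-1)\times(n-1)$ minor again satisfies the hypotheses --- is precisely the ``similar'' argument the paper alludes to but does not write out; your verification of the new column inequality
\[
\sum_{i\ge2,\,i\ne k}\Bigl(a_{i,k}+\tfrac{a_{1,k}}{A_1}\,a_{i,1}\Bigr)<A_k-\tfrac{a_{k,1}}{A_1}\,a_{1,k}
\]
is exactly the computation the paper carries out for part~(1). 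The closing $M$-matrix remark (inverse-positivity of $B$ forces $\operatorname{adj}B\ge0$ with no zero row, hence each cofactor sum against a positive vector is strictly positive) is another valid and efficient alternative, though it imports more background than the paper uses.
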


\begin{lemma}\label{positive}
For any $1 \le i \le d,$ the number $H_{i,d}$ is positive.
\begin{proof}
Since the column vector $\mathbb{H} ={}^t ( H_{0,d}, H_{1,d},\dots, H_{d+1,d})$ is an eigenvector for $(d+1)!$ of $\bm{H}_d$, we have $$ \left(   \bm{H}_d -(d+1)!E\right) \begin{pmatrix}  H_{0,d} \\\vdots\\H_{d+1,d}\end{pmatrix} =\begin{pmatrix}  0\\ \vdots \\0 \end{pmatrix}.$$ Since all the entries in the $-1$ and $d$th rows are zero except for $h^{(d)}_{-1,-1} =h^{(d)}_{d,d} =1,$ we have $H_{0,d} =H_{d+1,d} =0.$ Let $\bm{H}_d'=( h^{(d)}_{i,j} )_{    0\le i,j \le d-1}.$ Since the eigenvalue $(d+1)!$ is simple, the rank of $\bm{H}_d'-(d+1)! E$ is $d-1$. The sum of any column in the matrix is zero; therefore, the $(d-1)$th row can be removed. Let $\bm{H}_d'' =(h^{(d)}_{i,j})_{0\le i \le d-2, 0\le j \le d-1}$. We have $$  \left( \bm{H}_d'' -(d+1)! E \right) \begin{pmatrix}  H_{1,d} \\ \vdots \\ H_{d-1,d}\end{pmatrix} =-H_{d,d} \begin{pmatrix}  h^{(d)}_{0,d-1}  \\\vdots \\ h^{(d)}_{d-2,d-1}\end{pmatrix} .$$
If we regard $H_{1,d},\dots, H_{d-1,d}$ as variables and $H_{d,d}$ as a constant, this equation uniquely determines $H_{1,d}, \dots, H_{d-1,d}$. Since $H_{d,d} =F_{0,d}>0$, by Cramer's formula and Lemma \ref{det}, all $H_{i,d}$ are positive, and the result follows.
\end{proof}
\end{lemma}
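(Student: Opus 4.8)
The plan is to use that $\left(H_{0,d},H_{1,d},\dots,H_{d+1,d}\right)$ is an eigenvector of $\bm{H}_d$ for its largest eigenvalue $(d+1)!$ (Lemma~\ref{sym1} together with Lemma~\ref{sym3}), and to show that, once the two forced zero coordinates are stripped off, the surviving coordinates form a \emph{positive} eigenvector of a strictly column-diagonally-dominant matrix, to which Lemma~\ref{det} and Cramer's rule apply. (Equivalently, after the reduction one may just invoke Perron--Frobenius; see the remark below.)

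First I would read off the boundary coordinates. The rows of $\bm{H}_d$ indexed $-1$ and $d$ are $(1,0,\dots,0)$ and $(0,\dots,0,1)$, since $h^{(d)}_{-1,-1}=h^{(d)}_{d,d}=1$ while all other entries in those rows vanish; hence the eigenvector equation forces $H_{0,d}=H_{d+1,d}=0$ (using $d\ge 1$, so $1-(d+1)!\ne 0$). Also $H_{1,d}=H_{d,d}=F_{0,d}>0$, the first equality by Proposition~\ref{sym5} and the last because $F_{0,d}$ is a sum of positive numbers (or by descending induction via \eqref{ind2} from $F_{d,d}=1$); so the cases $i=1$ and $i=d$ are settled and for $d\le 2$ nothing more is needed — assume $d\ge 3$. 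The remaining coordinates then form an eigenvector for $(d+1)!$ of the principal block $\bm{H}_d'=(h^{(d)}_{i,j})_{0\le i,j\le d-1}$. Since every column of $\bm{H}_d$ sums to $(d+1)!$ (it counts permutations of $[d+2]$ with a prescribed first letter) and rows $-1,d$ contribute nothing in columns $0,\dots,d-1$, every column of $\bm{H}_d'-(d+1)!E$ sums to $0$; so one row of $\bm{H}_d'-(d+1)!E$ is redundant and may be deleted, and moving the now-known term $H_{d,d}$ to the right produces a square system $M\,\bm{x}=\bm{b}$ with $M=(h^{(d)}_{i,j})_{0\le i,j\le d-2}-(d+1)!E$, $\bm{x}={}^t(H_{1,d},\dots,H_{d-1,d})$ and $\bm{b}=-H_{d,d}\,{}^t(h^{(d)}_{0,d-1},\dots,h^{(d)}_{d-2,d-1})$.

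Now I would check the hypotheses of Lemma~\ref{det}: the diagonal entry $h^{(d)}_{j,j}-(d+1)!$ is negative, because $h^{(d)}_{j,j}=A(d+2,j+1,j+2)$ is a proper summand of the column sum $(d+1)!$; the off-diagonal entries of $M$ and the entries of $\bm{b}$ are strictly positive in modulus, and $\bm{b}$ has all coordinates of one sign since $H_{d,d}>0$; and for each $j$ one has $\sum_{i\ne j}h^{(d)}_{i,j}=(d+1)!-h^{(d)}_{j,j}-h^{(d)}_{d-1,j}<(d+1)!-h^{(d)}_{j,j}$, the strictness coming from $h^{(d)}_{d-1,j}>0$. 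Then Lemma~\ref{det}(1) gives $\det M\ne 0$ (of sign $(-1)^{d-1}$), so the system has a unique solution, which must be $\bm{x}$; and by Lemma~\ref{det}(2) every Cramer numerator $\det M_i$ has the same sign $(-1)^{d-1}$, whence $H_{i,d}=\det M_i/\det M>0$. Combined with $H_{1,d},H_{d,d}>0$, this gives $H_{i,d}>0$ for all $1\le i\le d$. Remark: alternatively, the characteristic polynomial of $\bm{H}_d$ factors as $(1-\lambda)^2\det(\bm{H}_d'-\lambda E)$ (expand along rows $-1$ and $d$), so $(d+1)!$ is the simple top eigenvalue of the nonnegative, irreducible matrix $\bm{H}_d'$ (irreducible because all its off-diagonal entries are positive, by the claim below); Perron--Frobenius then makes its eigenvector positive up to a scalar, and $H_{1,d}=F_{0,d}>0$ fixes the sign.

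The step I expect to be the real content is the strict positivity of the entries of $\bm{H}_d$ used above, namely that $A(d+2,c,v)>0$ whenever $1\le c\le d$ and $2\le v\le d+1$ — this covers the off-diagonal $h^{(d)}_{i,j}$ (so also the irreducibility in the Perron--Frobenius variant), the column $h^{(d)}_{\bullet,d-1}$ occurring in $\bm{b}$, and the deleted row $h^{(d)}_{d-1,\bullet}$. I would settle this by the explicit permutation $v,\,1,\,\pi$ of $[d+2]$, where $\pi$ is an arrangement of the remaining $d$ values with exactly $c-1$ descents (possible since $0\le c-1\le d-1$): it begins with $v$, has a descent at position $1$, an ascent at position $2$ (the value $1$ is followed by something larger), and $c-1$ further descents, hence exactly $c$ descents and first letter $v$. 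Everything else — the column-sum bookkeeping and the determinant sign computations — is routine once Lemma~\ref{det} is in hand.
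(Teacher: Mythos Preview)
Your argument is correct and follows essentially the same route as the paper's proof: reduce the eigenvector equation for $(d+1)!$ to the inner block, drop the redundant row, move the known $H_{d,d}=F_{0,d}>0$ term to the right, and conclude positivity via Cramer's rule and Lemma~\ref{det}. You supply more detail than the paper does---in particular you verify the hypotheses of Lemma~\ref{det} by proving $A(d+2,c,v)>0$ for the relevant range and checking strict column dominance---and your Perron--Frobenius remark is a legitimate alternative once irreducibility of $\bm{H}_d'$ is established.
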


\begin{prop}\label{N3}
For $d\ge 1$, we have $$ \frac{ \sqrt{2}^d  }{   (d+1)!d }  \le H_{1,d} \le \frac{    2^{d+1}   }{   (d+1)!   } .$$
\begin{proof}
Since ${}^t (H_{0,d} ,\dots, H_{d+1,d})$ is an eigenvector for $(d+1)!$ of $\bm{H}_d$, we have 
\begin{align*}
h^{(d)}_{0,0} H_{1,d}+h^{(d)}_{0,1}H_{2,d}\dots +h^{(d)}_{0,d-1} H_{d,d}=&(d+1)! H_{1,d} \\
h^{(d)}_{1,0} H_{1,d}+h^{(d)}_{1,1}H_{2,d}\dots +h^{(d)}_{1,d-1} H_{d,d}=&(d+1)! H_{2,d} \\
\vdots& \\
h^{(d)}_{d-1,0} H_{1,d}+h^{(d)}_{d-1,1}H_{2,d}\dots +h^{(d)}_{0,d-1} H_{d,d}=&(d+1)! H_{d,d}.
\end{align*}
Since $\sum^d_{i=1} H_{i,d}=1$ and all $H_{i,d}$ are positive (Lemma \ref{positive}), Lemma \ref{2-power} implies \begin{align*} (d+1)!H_{1,d} =& h^{(d)}_{0,0} H_{1,d} +\dots + h^{(d)}_{0,d-1} H_{d,d} \\\le& h^{(d)}_{0,0}  +\dots +h^{(d)}_{0,d-1} \\\le & 2^{d-1}.\end{align*}
 Hence, the right inequality follows.
 
By Lemma \ref{simply}, \ref{positive}, and Proposition \ref{sym5}, $H_{   \left[  \frac{d+1}{2}\right],d   }$ is the maximum among $H_{1,d},\dots, H_{d,d}$. Since $\sum^d_{i=1} H_{i,d}=1,$ we have $H_{ \left[  \frac{d+1}{2}\right],d} \ge \frac{1}{d}.$ Hence, Lemma \ref{2-power} implies $$ (d+1)!  H_{1,d}> h^{(d)}_{0,\left[  \frac{d+1}{2}\right]-1} H_{ \left[  \frac{d+1}{2}\right],d  }   \ge 2^{  d-\left[  \frac{d+1}{2}\right]+1  }  \frac{1}{d}  \ge \sqrt{2}^d \frac{1}{d}.$$
Hence, the result follows.
\end{proof}
\end{prop}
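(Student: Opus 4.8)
The plan is to turn the eigenvector relation
$$ \bm{H}_d\,{}^t(H_{0,d},H_{1,d},\dots,H_{d+1,d}) = (d+1)!\,{}^t(H_{0,d},H_{1,d},\dots,H_{d+1,d}) $$
into a single scalar identity for $H_{1,d}$ and then sandwich it between two elementary estimates. Reading off the row indexed $i=0$ of $\bm{H}_d$ and discarding the $j=-1$ and $j=d$ terms, which vanish because $H_{0,d}=H_{d+1,d}=0$ (Lemma \ref{positive}), gives
$$ (d+1)!\,H_{1,d} = \sum_{j=0}^{d-1} h^{(d)}_{0,j}\,H_{j+1,d} = \sum_{j=0}^{d-1} 2^{d-j}\,H_{j+1,d}, $$
the last step being Lemma \ref{2-power}. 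Besides this identity I will use only that every $H_{i,d}$ with $1\le i\le d$ is positive (Lemma \ref{positive}) and that $\sum_{i=1}^{d}H_{i,d}=H_d(1)=F_d(0)=1$.

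For the upper bound I bound each coefficient by $H_{j+1,d}\le 1$ and sum the geometric progression: $(d+1)!\,H_{1,d}\le \sum_{j=0}^{d-1}2^{d-j}=2^{d+1}-2<2^{d+1}$, i.e. $H_{1,d}<\frac{2^{d+1}}{(d+1)!}$. For the lower bound I keep only the single term $j=m-1$, where $m=[(d+1)/2]$; since all terms are nonnegative, $(d+1)!\,H_{1,d}\ge 2^{d-m+1}H_{m,d}$. Here Lemma \ref{simply}, together with the self-reciprocity of Proposition \ref{sym5} and the positivity of Lemma \ref{positive}, shows that $H_{m,d}$ is the largest of the $d$ positive numbers $H_{1,d},\dots,H_{d,d}$, so $H_{m,d}\ge \tfrac1d$; and a short check on the two parities of $d$ gives $d-m+1\ge d/2$, whence $2^{d-m+1}\ge\sqrt{2}^{d}$ and $H_{1,d}\ge\frac{\sqrt{2}^{d}}{(d+1)!\,d}$.

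The one step that is not a routine manipulation is the claim that the peak of $H_{1,d},\dots,H_{d,d}$ sits at $m=[(d+1)/2]$; this is exactly where Lemma \ref{simply} does the real work, and Lemma \ref{simply} in turn consumes the recurrence of Lemma \ref{BW} and the symmetry of Proposition \ref{sym5}. Granting that unimodality, the proposition is a two-line pinch. If one insisted on a proof bypassing Lemma \ref{simply}, the main obstacle would be precisely to locate, and lower-bound by $\tfrac1d$, the largest entry of the all-positive eigenvector of $\bm{H}_d$, since that is what simultaneously produces the factor $\tfrac1d$ and lets it be paired with the largest available power of $2$ coming from the $0$-th row of $\bm{H}_d$; the upper bound, by contrast, needs nothing beyond positivity and the normalization $\sum_i H_{i,d}=1$.
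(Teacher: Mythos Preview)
Your argument is correct and follows the paper's proof essentially line for line: both extract the $i=0$ row of the eigenvector relation, invoke Lemma~\ref{2-power} to turn it into $(d+1)!H_{1,d}=\sum_{j=0}^{d-1}2^{d-j}H_{j+1,d}$, use positivity and $\sum_i H_{i,d}=1$ for the upper bound, and then for the lower bound retain only the $j=\left[\frac{d+1}{2}\right]-1$ term combined with the fact (from Lemma~\ref{simply} and Proposition~\ref{sym5}) that $H_{[(d+1)/2],d}\ge\frac1d$. Your write-up is in fact slightly more explicit than the paper's in justifying why Lemma~\ref{simply} forces the maximum of the $H_{i,d}$ to sit at the middle index, but the underlying strategy is identical.
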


\section{The distribution of primes}

\subsection{Zeros of the $h$-polynomials of $\Delta_n$}

\begin{coro}\label{N5}
If $n \ge 6$, then $$ |\widetilde{\chi}(\Delta_n)|  \sim H_{1,d_n}  f^{\Delta_n}_{d_n}  \left|\rho^{\Delta_n^{(k_n)}}_0 \right|   (d_n+1)!^{k_n}$$ as $k_n \to \infty.$
\begin{proof}
It directly follows from Theorem \ref{main2}.
\end{proof}
\end{coro}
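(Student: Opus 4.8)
The plan is to obtain Corollary \ref{N5} as a direct specialization of Theorem \ref{main2}, so essentially no new argument is required beyond checking that the hypotheses apply. Theorem \ref{main2} is stated for an arbitrary abstract simplicial complex $\Delta$ of dimension $d\ge 1$, and its second assertion is exactly $|\wchi(\Delta)|\sim H_{1,d} f^{\Delta}_d |\rho^{\Delta^{(k)}}_{0}| (d+1)!^k$ as $k\to\infty$. Taking $\Delta=\Delta_n$, $d=d_n$, and $k=k_n$ yields precisely the claimed asymptotic, provided the dimension condition $d_n\ge 1$ holds.

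Hence the one point I would spell out is that $\dim\Delta_n\ge 1$ whenever $n\ge 6$. For this I would note that $6$ is squarefree with prime factor set $P(6)=\{2,3\}$, so for every $n\ge 6$ the set $\{2,3\}$ is a simplex of $\Delta_n$; since $\#\{2,3\}=2$, this is a $1$-simplex, and therefore $d_n=\dim\Delta_n\ge 1$. With this observation in place, the hypotheses of Theorem \ref{main} (and thus of Theorem \ref{main2}) are satisfied for $\Delta=\Delta_n$, and the corollary follows by simply matching notation.

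I do not anticipate a real obstacle: all the substance is already contained in Theorem \ref{main2}, whose proof in turn rests on Lemma \ref{order}, Proposition \ref{sym5}, and the relations \eqref{yuru1}--\eqref{yuru2}. The only subtlety worth flagging in the write-up is that, as in Theorem \ref{main2}, the relation ``$\sim$'' here is taken in the subdivision parameter $k_n$ with $n$ held fixed, so the statement asserts the asymptotic separately for each fixed $n\ge 6$ rather than jointly in $n$ and $k_n$.
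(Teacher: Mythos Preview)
Your proposal is correct and matches the paper's own proof, which simply says ``It directly follows from Theorem \ref{main2}.'' Your additional verification that $d_n\ge 1$ for $n\ge 6$ (via the $1$-simplex $\{2,3\}$) is a helpful detail the paper leaves implicit, but the approach is identical.
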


Hence, $|\widetilde{\chi}(\Delta_n)|$ is almost the right hand side. 

\subsection{The growth of the dimension of $\Delta_n$}
In this section, we estimate the growth of the dimension of $\Delta_n$.

\begin{prop}\label{dim}
We have $$ d_n=\dim \Delta_n= \frac{\log n}{ \log \log n}   +O\left(  \frac{\log n}{(\log \log n)^2}\right) .$$
\begin{proof}
By Theorem 4.7 of \cite{Apo}, we have $$ C_1 n\log n < p_n < C_2 n\log n  $$ for some constants $C_1, C_2,$ and any $n\ge 2$. For example, put $C_1=\frac{1}{6}$ and $C_2=24$. Then, $C_1< p_1=2<C_2$. By the definition, $\dim \Delta_n =d$ if and only if $$ p_1 p_2\dots p_{d+1} \le n < p_1 p_2 \dots p_{d+2}.$$ Hence, we have 
\begin{align}
C_1^{d+1} (d+1)! \prod^{d+1}_{m=2} \log m < n < C_2^{d+2} (d+2)! \prod^{d+2}_{m=2} \log m. \label{put1}
\end{align}
Since the function $\frac{\log x}{\log \log x}$ is simply increasing in $[e^e,\infty)$, we have 
$$ \frac{\log n}{\log \log n} < \frac{   (d+2) \log C_2 +\log (d+2)!  +\sum^{d+2}_{m=2} \log \log m     }{  \log \left( (d+2) \log C_2 +\log (d+2)!  +\sum^{d+2}_{m=2} \log \log m   \right)   }$$ if $n\ge e^e$. By integral test, we have
\begin{multline*}
d \log \log d - \Li d+C\le \sum^{d+1}_{m=2} \log \log m \le (d+1) \log \log (d+1) - \Li (d+1) +C,
\end{multline*}
where $\Li x=\int^x_2 \frac{dt}{\log t}$ and $C= \int^e_2 \frac{dt}{\log t} + \log \log 2$. By Stirling's formula, we have , for $\varepsilon >0,$ 
\begin{align}
\frac{\log n}{\log \log n }<& \frac{   (d+2)\log C_2 + (d+3)\log (d+3) + (d+2)\log \log (d+2)      }{    \log (d+3) +\log \log (d+3)     } \notag \\
<& (1+\varepsilon) d\label{put2}
\end{align}
if $n$ is sufficiently large. If we replace $\varepsilon$ by$$\frac{2\log C_2}{   \log d +\log \log d} ,$$ the inequality \eqref{put2} holds. Hence, we have
\begin{align*}
d>&\frac{1}{1+\varepsilon} \frac{\log n}{\log \log n}\\
=& \frac{\log n}{\log \log n} -\frac{\varepsilon}{1+\varepsilon}\frac{\log n}{\log \log n} \\
>&\frac{\log n}{\log \log n}-\frac{2\log C_2}{\log d} \frac{\log n}{\log \log n}.
\end{align*}
If we put $\varepsilon =\frac{1}{2}$ and take logarithm in \eqref{put2}, we have
\begin{align}
\log d > \log \left(   \frac{2}{3}  \frac{\log n}{\log \log n} \right)> \frac{1}{2} \log \log n. \label{put3}
\end{align}
Hence, we obtain $$ d> \frac{\log n}{\log \log n} -4\log C_2  \frac{\log n}{  (\log \log n)^2   }  .$$

By the left inequality of \eqref{put1}, we can similarly obtain $$ \frac{\log n}{\log \log n} \ge  (1-\varepsilon)d, $$ and we can replace $\varepsilon $ by $$  \frac{2\log C_1}{\log d +\log \log d} . $$By \eqref{put3}, we obtain $$  d\le \frac{\log n}{\log \log n}   +4 \log C_1 \frac{\log n}{ ( \log \log n )^2  }.$$ Hence, the result follows.
\end{proof}
\end{prop}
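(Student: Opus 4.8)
The plan is to translate the computation of $d_n$ into a statement about the size of primorials and then invert a monotone asymptotic relation, tracking the error terms with care. First I would record the combinatorial characterization already noted above: $\dim\Delta_n = d$ precisely when $p_1 p_2 \cdots p_{d+1} \le n < p_1 p_2 \cdots p_{d+2}$, where $p_m$ is the $m$th prime, since $\Delta_n$ acquires a $d$-simplex exactly once $n$ reaches the product of the first $d+1$ primes. Thus $d_n + 1$ is the largest $k$ with $\prod_{m\le k} p_m \le n$, and everything reduces to understanding $\sum_{m=1}^{k}\log p_m$ as a function of $k$ and then solving for $k$ in terms of $\log n$.

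Next I would feed in the classical two-sided estimate $C_1 m\log m < p_m < C_2 m\log m$, valid for all $m \ge 1$ with suitable constants (Theorem 4.7 of \cite{Apo}; one may take, say, $C_1 = \tfrac{1}{6}$, $C_2 = 24$). Multiplying over $m = 1, \dots, d+1$ and over $m = 1, \dots, d+2$, and separating the factorial from the iterated logarithm, gives
\[
C_1^{d+1}(d+1)!\prod_{m=2}^{d+1}\log m < n < C_2^{d+2}(d+2)!\prod_{m=2}^{d+2}\log m .
\]
Taking logarithms, $\log n$ is sandwiched between expressions of the shape $(d+c)\log C_i + \log(d+c)! + \sum_{m=2}^{d+c}\log\log m$. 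I would then estimate the two sums: Stirling gives $\log(d+c)! = (d+c)\log(d+c) - (d+c) + O(\log d)$, and the integral test gives $\sum_{m=2}^{d+c}\log\log m = (d+c)\log\log(d+c) - \Li(d+c) + O(1)$ with $\Li(d+c) = O(d/\log d) = o(d)$. The point is that the multiplicative constants $C_i$ contribute only additively in the log, so summing $d$ of them costs merely $O(d)$ — which turns out to be exactly the precision one needs.

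The inversion is the delicate part, and the trick is to avoid solving the transcendental equation directly. Using that $x \mapsto \frac{\log x}{\log\log x}$ is increasing on $[e^e, \infty)$, the upper bound $\log n < G(d) := (d+2)\log C_2 + \log(d+2)! + \sum_{m=2}^{d+2}\log\log m$ yields $\frac{\log n}{\log\log n} < \frac{G(d)}{\log G(d)}$. Expanding the right-hand side, the $\sum\log\log m \approx (d+2)\log\log(d+2)$ in the numerator is offset by the $\log\log$ term that enlarges the denominator $\log G(d)$, so that $\frac{G(d)}{\log G(d)} < (1+\varepsilon)d$ with an \emph{explicit} $\varepsilon$ that one may then take to be $\varepsilon = \frac{2\log C_2}{\log d + \log\log d}$. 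A crude first pass (e.g. $d > \tfrac{2}{3}\frac{\log n}{\log\log n}$, hence $\log d > \tfrac{1}{2}\log\log n$) fed back into $\frac{\varepsilon}{1+\varepsilon}\frac{\log n}{\log\log n} < \frac{2\log C_2}{\log d}\cdot\frac{\log n}{\log\log n} < 4\log C_2\cdot\frac{\log n}{(\log\log n)^2}$ then gives $d > \frac{\log n}{\log\log n} - 4\log C_2\frac{\log n}{(\log\log n)^2}$. Running the mirror-image argument with the left-hand inequality of the display and $C_1$ produces the matching bound $d \le \frac{\log n}{\log\log n} + 4\log C_1\frac{\log n}{(\log\log n)^2}$, and combining the two finishes the proof.

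I expect the main obstacle to be the bookkeeping of this cancellation: a priori the $\log\log(d+c) \approx \log\log\log n$ contributions would produce a relative error of size $\frac{\log\log\log n}{\log\log n}$, which is too large; it is only because these terms in the numerator and denominator of $\frac{G(d)}{\log G(d)}$ cancel down to $O\!\bigl(\frac{\log\log d}{\log d}\bigr)$ that the relative error collapses to the harmless $O\!\bigl(\frac{1}{\log\log n}\bigr)$ coming purely from the constants $C_i$. Making this cancellation rigorous, together with the bootstrap that upgrades the crude $o(1)$ relative error to the sharp $O\!\bigl(\frac{1}{\log\log n}\bigr)$, is where essentially all the care is needed; the estimates for $\log(d+c)!$ and $\sum\log\log m$ themselves are routine.
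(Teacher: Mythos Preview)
Your proposal is correct and follows essentially the same argument as the paper: the same primorial characterization of $d_n$, the same two-sided bound $C_1 m\log m < p_m < C_2 m\log m$ from \cite{Apo}, the same product inequality, the same monotonicity-of-$\frac{\log x}{\log\log x}$ inversion with the explicit choice $\varepsilon = \frac{2\log C_2}{\log d + \log\log d}$, and the same bootstrap via $\log d > \tfrac12\log\log n$ to reach the final constants $4\log C_i$. Your additional remarks on why the $\log\log d$ contributions cancel are accurate commentary on a step the paper leaves implicit, but the route is the same.
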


\subsection{The number of $d_n$-simplices of $\Delta_n$ }

In this section, we estimate the number of $d_n$-simplices of $\Delta_n$.

If $n=p_1^{e_1} p_2^{e_2}\dots p_d^{e_d}$, then the \textit{weight} of $n$ is defined by $\sum^d_{i=1} e_i$. Denote $\pi_d(x)$ the number of positive squarefree integers of weight $d$ not exceeding a real number $x$.

\begin{lemma}\label{N1}
We have $$  \pi_d(p_1p_2\dots p_{d+1} )  =O\left( C^d  \right)$$ for some constant $C>0$.
\begin{proof}
Suppose that a sequence of primes $q_1, q_2,\dots, q_d$ satisfies $q_1 q_2\dots q_d \le p_1 p_2 \dots p_{d+1}$ and $q_1<\dots <q_d$. We count the number of such $(q_1,\dots, q_d)$. The greatest member $q_d$ does not exceed $p_{d^3}$, since $q_1 q_2\dots q_d \ge p_1p_2 \dots p_{d-1} q_d$; that is, $$    q_d \le p_d p_{d+1}  \le C_2^2 (d+1)^2 \log^2 (d+1) . $$ Hence, we choose $q_1,q_2,\dots, q_d$ from the set $\{  p_1,p_2,\dots,p_{d^3}    \} .$

We choose $m$ satisfying $m \ge \frac{e^2 C_2}{C_1}$.  The number of $i, 1\le i \le d,$ such that $q_i \ge p_{md}$ is smaller than $[\log d]$. Indeed, if $ q_1,q_2,\dots, q_{d-[\log d]}   <p_{md} $ and $ q_{d-[\log d] +1} , \dots, q_d \ge p_{md}$, then we have 
$$  q_1q_2 \dots q_d \ge p_1 p_2 \dots p_{d-[\log d]}  p_{md}p_{md+1}  \dots p_{md+[\log d]} .$$ Hence, it must be that $$p_{md}\dots p_{md+[\log d]} \le p_{d-[\log d] +1}  \dots p_{d+1} .$$ The left hand side is, at least, $$  p_{md} \dots   p_{md+[\log d]} \ge \left(  C_1 md \log d  \right)^{[\log d]} ,$$ and the right hand side is, at most, $$ p_{d-[\log d] +1} \dots p_{d+1} \le \left(  C_2 d \log d  \right)^{[\log d]+1}  . $$ However, the condition $m\ge \frac{e^2 C_2}{C_1}$ implies $$ p_{md} \dots p_{md+[\log d]} > p_{d-[\log d]+1} \dots p_{d+1} .$$ Hence, the claim follows.

By Stirling's formula, we have 
\begin{align*}
\pi_d (p_1\dots p_{d+1}) \ll &\binom{md}{d} \frac{d^{3[\log d]}}{[\log d]!} \\
\ll & \frac{m^d d^d}{d!} d^{3\log d} \\
\ll & \frac{e^d m^d d^d}{d^d}d^{3\log d}\\
\ll &C^d
\end{align*}
for some constant $C>0$. Hence, the result follows.
\end{proof}
\end{lemma}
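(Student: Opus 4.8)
The plan is to bound $\pi_d(p_1p_2\cdots p_{d+1})$, the number of strictly increasing prime tuples $(q_1,\dots,q_d)$ with $q_1q_2\cdots q_d \le p_1p_2\cdots p_{d+1}$, by a sequence of successive restrictions on where the primes can live, combined with a crude binomial count and Stirling's formula. First I would observe that in such a tuple the largest prime $q_d$ cannot be too large: since $q_1\cdots q_{d-1}\ge p_1\cdots p_{d-1}$, we get $q_d \le p_d p_{d+1}$, and by the bound $p_n < C_2 n\log n$ (Theorem 4.7 of \cite{Apo}, as used in Proposition \ref{dim}) this is at most $C_2^2 (d+1)^2\log^2(d+1)$, which is below $p_{d^3}$ for large $d$. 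So all the $q_i$ come from the set $\{p_1,\dots,p_{d^3}\}$, giving a first (too weak on its own) bound of $\binom{d^3}{d}$.

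The key refinement is to show that only very few of the $q_i$ — at most $[\log d]$ of them — can be as large as $p_{md}$, where $m$ is a fixed constant chosen with $m \ge e^2 C_2/C_1$. The argument here is a pigeonhole/size comparison: if $[\log d]+1$ of the primes were $\ge p_{md}$, then the product $q_1\cdots q_d$ would be at least $p_1\cdots p_{d-[\log d]}$ times a product of $[\log d]+1$ primes each $\ge p_{md}$, and this would have to be $\le p_1\cdots p_{d+1}$, forcing $p_{md}\cdots p_{md+[\log d]} \le p_{d-[\log d]+1}\cdots p_{d+1}$. Estimating the left side from below by $(C_1 md\log d)^{[\log d]}$ and the right side from above by $(C_2 d\log d)^{[\log d]+1}$ using the two-sided bounds $C_1 n\log n < p_n < C_2 n\log n$, the choice of $m$ makes the left side strictly larger — a contradiction. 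This means a valid tuple is obtained by choosing at most $d$ primes from $\{p_1,\dots,p_{md}\}$ and at most $[\log d]$ primes from the remaining range $\{p_{md+1},\dots,p_{d^3}\}$, so
$$ \pi_d(p_1\cdots p_{d+1}) \ll \binom{md}{d}\cdot \frac{(d^3)^{[\log d]}}{[\log d]!}. $$

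Finally I would run Stirling's formula on both factors. For the binomial, $\binom{md}{d} \ll \frac{(md)^d}{d!} \ll \frac{m^d d^d}{(d/e)^d} = (em)^d$, a clean exponential bound in $d$. For the second factor, $d^{3[\log d]} = e^{3[\log d]\log d} \le e^{3(\log d)^2}$, which is subexponential in $d$ (it is $d^{O(\log d)}$), and dividing by $[\log d]!$ only helps; so this factor is absorbed into an arbitrarily small exponential, i.e. for any $\epsilon>0$ it is $\ll (1+\epsilon)^d$ for large $d$. Multiplying, $\pi_d(p_1\cdots p_{d+1}) \ll (em(1+\epsilon))^d \ll C^d$ for a suitable constant $C$, which is the claim. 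The main obstacle — and the step requiring the most care — is the pigeonhole bound of the middle paragraph: one must pin down the constant $m$ correctly against $C_1,C_2$ and handle the floor function $[\log d]$ and the edge cases cleanly so that the inequality $p_{md}\cdots p_{md+[\log d]} > p_{d-[\log d]+1}\cdots p_{d+1}$ genuinely follows from $m\ge e^2C_2/C_1$; the binomial and Stirling estimates at the end are routine by comparison.
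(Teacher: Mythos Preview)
Your proposal is correct and follows essentially the same route as the paper's proof: the same bound $q_d\le p_d p_{d+1}$ placing all primes in $\{p_1,\dots,p_{d^3}\}$, the same pigeonhole argument with the constant $m\ge e^2 C_2/C_1$ limiting the number of $q_i\ge p_{md}$ to about $[\log d]$, and the same combinatorial bound $\binom{md}{d}\cdot d^{3[\log d]}/[\log d]!$ finished off with Stirling. Your caveat about handling the floor $[\log d]$ carefully is apt --- the paper's own indexing in that step is slightly loose --- but the argument goes through.
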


\begin{prop}\label{N2}
We have $$ f^{\Delta_n}_{d_n} =O\left(  C^{d_n} \right)$$ for some constant $C>0$.
\begin{proof}
By the definition of $\Delta_n$, we have $$ f^{\Delta_n}_{d_n}  =\pi_{d_n +1}(n) \le \pi_{d_n+1}(p_1p_2\cdots p_{d_n+2}). $$
The result follows from Lemma \ref{N1}.
\end{proof}
\end{prop}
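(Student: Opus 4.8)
The plan is to bound $f^{\Delta_n}_{d_n}$ by reducing to the already-established Lemma~\ref{N1}. First I would observe that a $d_n$-simplex of $\Delta_n$ is exactly a set $P(k)=\{q_1,\dots,q_{d_n+1}\}$ of $d_n+1$ distinct prime factors of some squarefree integer $k\le n$; in other words, $f^{\Delta_n}_{d_n}$ counts squarefree integers of weight $d_n+1$ not exceeding $n$, i.e.\ $f^{\Delta_n}_{d_n}=\pi_{d_n+1}(n)$. This identity just unwinds the definitions of $\Delta_n$, of ``$d$-simplex'' (cardinality $d+1$), and of ``weight''.

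Next I would use monotonicity of $\pi_d$ together with the defining inequality for the dimension. Since $\dim\Delta_n=d_n$ means $p_1p_2\cdots p_{d_n+1}\le n<p_1p_2\cdots p_{d_n+2}$, we have $n< p_1p_2\cdots p_{d_n+2}$, and since $\pi_d$ is nondecreasing in its real argument, $\pi_{d_n+1}(n)\le \pi_{d_n+1}(p_1p_2\cdots p_{d_n+2})$. This is the crucial step that lets us replace the awkward bound $n$ by a product of the first few primes, which is the form Lemma~\ref{N1} handles (after a harmless index shift: Lemma~\ref{N1} gives $\pi_d(p_1\cdots p_{d+1})=O(C^d)$, and here we want $\pi_{d+1}(p_1\cdots p_{d+2})$, which is the same statement with $d$ replaced by $d+1$, changing only the constant $C$).

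Finally I would invoke Lemma~\ref{N1} to conclude $\pi_{d_n+1}(p_1\cdots p_{d_n+2})=O\!\left(C^{d_n+1}\right)=O\!\left(C^{d_n}\right)$ for a suitable constant $C>0$, which gives $f^{\Delta_n}_{d_n}=O(C^{d_n})$ as claimed. There is essentially no obstacle here: the entire content of this proposition has been pushed into Lemma~\ref{N1}, whose proof (the careful counting of the primes $q_i$ using the prime-number bounds $C_1 n\log n<p_n<C_2 n\log n$ and Stirling's formula) is where the real work lies. The only points to be careful about in writing the proof are stating the identity $f^{\Delta_n}_{d_n}=\pi_{d_n+1}(n)$ cleanly and making the index shift $d\mapsto d+1$ explicit so that the constant absorbs the extra factor.
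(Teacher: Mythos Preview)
Your proposal is correct and follows exactly the paper's approach: identify $f^{\Delta_n}_{d_n}=\pi_{d_n+1}(n)$, bound $n$ above by $p_1\cdots p_{d_n+2}$ using the definition of $d_n$, and apply Lemma~\ref{N1} after the index shift $d\mapsto d+1$. Your write-up is in fact more explicit than the paper's about why the inequality $n<p_1\cdots p_{d_n+2}$ holds and about absorbing the extra factor of $C$, but the argument is identical.
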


\subsection{The special zeros $\rho^{\Delta_n^{(k_n)}}_0  $  }

The last problem is to estimate $ \left|\rho^{\Delta_n^{(k_n)}}_0 \right| $.

\begin{defi}
By Corollary \ref{N5}, we can put $$ \left| \rho^{\Delta_n^{(k_n)}}_0   \right|\sim \alpha_n(d_n+1)!^{k_n} $$ as $k_n \to \infty$, since $\widetilde{\chi}(\Delta_n), H_{1,d_n}$, and $ f^{\Delta_n}_{d_n} $ are constant for any fixed $n\ge 6$. Namely,  $$  \alpha_n = \frac{   \wchi(\Delta_n)  }{  H_{1,d_n} f^{\Delta_n}_{d_n}      } .$$
\end{defi}

The numbers $\alpha_n$ are mysterious at present, however it is essential in this paper. At the beginning, our approach is topological. We use the two elementary topological notions: Euler characteristic and barycentric subdivision. However, the results of Proposition \ref{P1} and \ref{RHE} are very strong. I guess that $\alpha_n$ are related to something topological deeply, and I require the something to be irregular, difficult, and a major object in topology since prime numbers and $M(x)$ are so in number theory. For example, the stable homotopy groups of spheres satisfy the requirements. At least, the Riemann zeta function and the stable homotopy groups of spheres are related via the Bernoulli numbers. See Theorem 1.5 and 1.6 of \cite{Ada65}. However, to estimate $\alpha_n$ remains as an open problem.

We end this paper by proving  Proposition \ref{P1} and \ref{RHE}.

\begin{proof}[Proof of Proposition \ref{P1}]
By Corollary \ref{N5} and Proposition \ref{N3} and \ref{N2}, we have
\begin{align*}
\widetilde{\chi}(\Delta_n)  \ll& H_{1,d_n} f^{\Delta_n}_{d_n}  \left| \rho^{\Delta_n^{(k_n)}}_0 \right| (d_n+1)!^{k_n} \\
\ll& \frac{C^{d_n}}{(d_n+1)!} |\alpha_n | \\
\ll& C^{d_n} (d_n+1)!\\
\ll& d_n^{d_n}C^{d_n} \\
=&\exp\left(  d_n \log d_n  +d_n \log C  \right).
\end{align*}
By Proposition \ref{dim}, we have
\begin{align*}
\widetilde{\chi}(\Delta_n)  \ll &  \exp \left(  \frac{\log n}{\log \log n} \log \left(  \frac{\log n}{\log \log n} \right) +A\frac{\log n}{\log \log n}   \right) \\
=&\exp \left(  \log n -\frac{\log n}{\log \log n} \log \log \log n +A\frac{\log n}{\log \log n}   \right)\\
\ll& n \exp \left(  -A \frac{\log n \log \log \log n}{\log \log n}   \right)
\end{align*}
for some constant $A>0$. Hence, the result follows.
\end{proof}

\begin{proof}[Proof of Proposition \ref{RHE}]
If $$\alpha_n= O\left(   (d_n+1)!^{ \frac{3}{2} +\varepsilon }\right) ,$$
then we have 
\begin{align*}
\wchi(\Delta_n) \ll & \frac{C^{d_n}}{(d_n+1)!} |\alpha_n |\\
\ll & (d_n+1)!^{ \frac{1}{2} +\varepsilon } C^{d_n} \\
\ll & n^{ \frac{1}{2} +\varepsilon }.
\end{align*}

Next, we assume the Riemann Hypothesis. Since $n!> (ne^{-1})n$, we have
\begin{align*}
(d_n+1)!  \gg& (d_n e^{-1})^{d_n }  \\
=&\exp ( -d_n +d_n\log d_n) \\
\gg & n \exp \left(  -A \frac{ \log n \log \log \log n }{ \log \log n }  \right)
\end{align*}
by Proposition \ref{dim}. Hence, we have
\begin{align*}
|\alpha_n| \ll& \frac{ | \wchi(\Delta_n)| }{ H_{1,d_n}  f_{d_n}^{\Delta_n} }  \\
\ll & \frac{ (d_n+1)!  n^{\frac{1}{2} +\varepsilon}  }{ \sqrt{2}^{d_n} \cdot 1  }  \\
\ll & (d_n+1)! \sqrt{2}^{-d_n} (d_n+1)!^{ \frac{1}{2} +\varepsilon }  \exp \left(  B \frac{ \log n \log \log \log n }{ \log \log n }  \right) \\
\ll& (d_n+1)!^{ \frac{3}{2} +\varepsilon  } \exp \left(  C \frac{ \log n \log \log \log n }{ \log \log n }  \right) \\
\ll& (d_n+1)!^{ \frac{3}{2} +\varepsilon'  }.
\end{align*}
Hence, the result follows.
\end{proof}

Note that Proposition \ref{dim} is precise and it is meaningless to improve Proposition \ref{N3} and Lemma \ref{N1}. If we supposed $$ H_{1,d} =\frac{  \sqrt{2}  }{  (d+1)! d  } $$ and $$\pi_d(p_1p_2\dots p_{d+1})=1 ,$$ we would have $\widetilde{\chi}(\Delta_n) \ll \frac{     \sqrt{2}^{d_n}  }{ (d_n +1)!  }  |\alpha_n|$, and this is almost $\frac{     C^{d_n}  }{ (d_n +1)!  }  |\alpha_n|$.

\end{document}